\documentclass{amsart}

\usepackage{amsmath}
\usepackage{amsthm}
\usepackage{amssymb}
\usepackage[colorlinks=true,linkcolor=blue,citecolor=blue,urlcolor=blue]{hyperref}

\DeclareMathOperator{\Gal}{Gal}
\DeclareMathOperator{\Sym}{Sym}
\DeclareMathOperator{\Stab}{Stab}
\DeclareMathOperator{\Fix}{Fix}

\newcommand{\PP}{\mathbb{P}}
\newcommand{\NN}{\mathbb{N}}
\newcommand{\QQ}{\mathbb{Q}}
\newcommand{\ZZ}{\mathbb{Z}}
\newcommand{\CC}{\mathbb{C}}
\newcommand{\OO}{\mathcal{O}}

\theoremstyle{plain}
\newtheorem{thm}{\protect\theoremname}
\theoremstyle{definition}
\newtheorem{definition}[thm]{\protect\definitionname}
\theoremstyle{plain}
\newtheorem{lemma}[thm]{\protect\lemmaname}
\theoremstyle{remark}
\newtheorem{rem}[thm]{\protect\remarkname}
\theoremstyle{plain}
\newtheorem{prop}[thm]{\protect\propositionname}

\providecommand{\definitionname}{Definition}
\providecommand{\lemmaname}{Lemma}
\providecommand{\propositionname}{Proposition}
\providecommand{\remarkname}{Remark}
\providecommand{\theoremname}{Theorem}

\numberwithin{thm}{section}

\title{Counting Polynomials via Galois Actions on Root Subsets}
\author{Or Ben-Porath}
\subjclass[2020]{Primary 11R32, 11R09; Secondary 11R45, 20B05}
\date{\today}

\begin{document}
\begin{abstract}
    This paper studies the number of monic integer polynomials $f$ of height at most $H$ whose Galois group, endowed with the action on the roots, is isomorphic to a prescribed permutation group $(G,\Omega)$.
    New upper bounds are obtained for several families of groups:
    transitive subgroups of the wreath product $S_m\wr S_r$ in the primitive action;
    $k$-homogeneous subgroups of $S_m$ in the action on $k$-subsets of $\{1,\ldots,m\}$;
    $k$-transitive subgroups of $S_m$ in the action on $k$-tuples of distinct elements of $\{1,\ldots,m\}$.
    Almost all finite groups in their regular permutation representation are also treated.
\end{abstract}
\maketitle

\section{Introduction}
\label{sec:introduction}
Probabilistic Galois theory studies the distribution of Galois groups of random algebraic objects.
A central instance is the large box model, where one considers monic integer polynomials of fixed degree and growing height.

A finite group $G$ acting faithfully on a finite set $\Omega$ is called a permutation group and denoted by $(G,\Omega)$.
Denote the height of a polynomial $f = \sum_i a_i x^i \in \ZZ[x]$ by $H(f) = \max_i |a_i|$. 
For $n\in \NN$ and $H\ge 1$, set
\[
\begin{aligned}
    B_n(H) &:= \{f\in \ZZ[x]: f \textnormal{ monic}, \deg f=n, H(f) \le H\},\\
    b_n(H) &:= |B_n(H)|=(2\lfloor H \rfloor+1)^n\sim (2H)^n.
\end{aligned}
\]
To each separable $f\in B_n(H)$ attach the permutation group $(G_f,\Omega_f)$, where
\[
    \Omega_f:=\{\alpha\in \CC:f(\alpha)=0\},\quad G_f:=\Gal\bigl(\QQ(\Omega_f)/\QQ\bigr),
\]
and $G_f$ acts on $\Omega_f$ naturally.
Given a finite permutation group $(G,\Omega)$, denote
\[
    B(H;G,\Omega):=\{f\in B_{|\Omega|}(H):(G_f,\Omega_f)\cong (G,\Omega)\}.
\]

Write $[n]:=\{1,\ldots,n\}$. Hilbert's irreducibility theorem implies that
\[
    \left|B(H;S_n,[n])\right|\sim b_n(H).
\]
For $n\ge 3$, Chela \cite{chela1963reducible} proved that
\[
    \left|\{f\in B_n(H): f \text{ reducible}\}\right|\sim \left|B(H;S_{n-1},[n])\right|\sim c_n H^{n-1},
\]
for an explicit constant $c_n$.
Van der Waerden \cite{van1936seltenheit} conjectured that
\[
    \left|\{f\in B_n(H):(G_f,\Omega_f)\not\cong (S_n,[n])\}\right|\sim \left|B(H;S_{n-1},[n])\right|.
\]
Progress towards this conjecture was made in a series of subsequent works \cite{knobloch1955hilbertschen, gallagher1973large, zywina2010hilbert, dietmann2013probabilistic, chow2023towards, anderson2023quantitative}.
Finally, Bhargava \cite[Theorem~1]{bhargava2025galois} proved the conjecture up to a constant, by showing that
\[
    \left|B(H;G,[n])\right|\ll H^{n-1}
\]
for every proper subgroup $G<S_n$.

In this paper we obtain new upper bounds on $\left|B(H;G,\Omega)\right|$ for several families of permutation groups $(G,\Omega)$.

\subsection*{Primitive wreath product actions}

Let $m,r,k\in \NN$ with $m\ge 2$, $k\le m/2$, and $(r,k)\neq (1,1)$.
Set
\[
    \binom{[m]}{k}:=\{A\subset [m]:|A|=k\},\qquad \Omega^P(m,r,k):=\binom{[m]}{k}^r,
\]
so that elements of $\binom{[m]}{k}$ are $k$-subsets of $[m]$, and elements of $\Omega^P(m,r,k)$ are $r$-tuples of $k$-subsets of $[m]$.
Put $n:=|\Omega^P(m,r,k)|=\binom{m}{k}^r$.
The wreath product $S_m\wr S_r$ (see Definition~\ref{def:wreath-product}) acts naturally on $\Omega^P(m,r,k)$ in its primitive action (see Definition~\ref{def:actions-of-wreath-product}).
Following \cite{bhargava2025galois} we call a transitive subgroup of $(S_m\wr S_r,\Omega^P(m,r,k))$ a primitive non-elemental permutation group of type $(r,m,k)$.
Bhargava \cite[Corollary 3]{bhargava2025galois} proved that for every primitive non-elemental permutation group $G$ of type $(r,m,k)$,
\begin{equation}
\label{eq:bhargava-primitive-wreath-product}
    \left|B(H;G,\Omega^P(m,r,k))\right|\ll H^{c\sqrt{n}(\log n)^2}=H^{c\binom{m}{k}^{r/2}r^2 \log^2 \binom{m}{k}},
\end{equation}
for some absolute constant $c$. 
Our first result improves this bound for $r>1$.

\begin{thm}
\label{thm:primitive-wreath-product}
    Fix $\epsilon>0$, integers $m,r\ge 2$, and $1\le k\le m/2$.
    Let $G\le S_m\wr S_r$ be a primitive non-elemental permutation group of type $(r,m,k)$.
    Then
    \begin{equation}
    \label{eq:thm-primitive-wreath-product-bound}
        \left|B(H;G,\Omega^P(m,r,k))\right|\ll H^{r\left(\binom{m}{k}+r-1\right)+1+\epsilon}.
    \end{equation}
\end{thm}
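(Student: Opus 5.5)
The plan is to reduce the count of $f$ to the count of an auxiliary polynomial $g$ attached to the \emph{imprimitive} action of $S_m\wr S_r$. Let $\Omega^I:=[r]\times\binom{[m]}{k}$, a set of size $N:=r\binom{m}{k}$. The coordinate maps give a $G$-equivariant correspondence
\[
\omega=(A_1,\dots,A_r)\;\longmapsto\;\{(1,A_1),\dots,(r,A_r)\}\subset\Omega^I .
\]
Let $\alpha_\omega$ denote the roots of $f$. For each $(i,A)\in\Omega^I$ set
\[
\beta_{i,A}:=\sum_{\omega:\,\omega_i=A}\alpha_\omega .
\]
The Galois group $G_f$ permutes the $\beta_{i,A}$ through the imprimitive action, so
\[
g(x):=\prod_{(i,A)}(x-\beta_{i,A})\in\ZZ[x]
\]
is monic of degree $N$. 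Its $j$-th coefficient is a symmetric function of the roots of $f$, and the roots satisfy $|\alpha_\omega|\ll H$, so $H(g)\ll H^{j}$ on that coefficient. Possibly after replacing $f$ by $f(x+t)$ for a bounded shift $t$, we may also assume $g$ is separable and that the $\beta$'s separate the blocks.

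The key structural step is to show that $f$ is essentially determined by $g$ together with few extra parameters. The stabilizer of $\omega$ is the intersection of the stabilizers of the $r$ points $(i,A_i)$. Hence $\QQ(\alpha_\omega)$ lies in the compositum of the fields $\QQ(\beta_{i,A_i})$, $i=1,\dots,r$. This lets us write
\[
\alpha_\omega=P(\beta_{1,A_1},\dots,\beta_{r,A_r})
\]
for a polynomial $P$ with rational coefficients, the same $P$ for every $\omega$ by Galois equivariance. For such a $P$, it suffices to use the monomials of degree $<\binom{m}{k}$ in each variable. I would then count the pairs $(g,f)$ via a fibration: first the number of admissible $g$, then, for each fixed $g$, the number of $f\in B_n(H)$ compatible with it.

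For the first count, I would not count $g$ naively, since that gives $H^{\sum j}$. Instead I would count through $g$'s first $\binom{m}{k}+r-1$ power sums, or rather the coordinates that actually vary, using the trace relations forced by the block structure. Concretely, $\sum_A\beta_{i,A}$ is independent of $i$ and equals $\binom{m-1}{k-1}\binom{m}{k}^{r-1}$ times the trace of $f$. Combined with the fact that only $r$ blocks of $\binom{m}{k}$ roots each are free, this should give at most $\ll H^{r(\binom{m}{k}+r-1)+\epsilon}$ possibilities. The $H^{\epsilon}$ comes from divisor-type bounds on the number of factorizations of the relevant resultants. For the second count, once $g$ is fixed, $f$ is determined by one remaining linear parameter, namely the constant term of $P$ together with its leading normalization. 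That parameter ranges over $\ll H$ values, which accounts for the final $+1$ in the exponent \eqref{eq:thm-primitive-wreath-product-bound}.

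The main obstacle is the first counting step: bounding the number of $g$ by $H^{r(\binom{m}{k}+r-1)}$ rather than the trivial $H^{N(N+1)/2}$. I would first try a determinant-method / Bhargava-style argument. The idea is to view the vector of power sums of the $\beta$'s as lying on a variety of dimension $r\binom{m}{k}+r(r-1)$, the dimension of the space of configurations cut out by the wreath-product symmetry, in a box whose side lengths are dictated by the height of $f$. One then bounds the lattice points on that variety by $H^{\dim+\epsilon}$ using a dimension-growth type estimate. If that fails, the fallback is to count the number fields $\QQ(\beta_{i,A})$ by discriminant, with the discriminant bounded in terms of $H$, and to apply a Schmidt-type bound on fields of degree $\binom{m}{k}$ with bounded discriminant, summed over the $r$ blocks.
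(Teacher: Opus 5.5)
Your architecture (an auxiliary polynomial from the imprimitive action, then a fiber count giving the $+1$) matches the paper's. However, there is a genuine gap exactly where you locate the main obstacle. Write $M:=\binom{m}{k}$, so $\deg g=N=rM$. You never bound the number of admissible $g$ by $H^{r(M+r-1)+\epsilon}$, and the routes you sketch do not get there. Two ingredients are missing.

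First, your height bound is far too weak: the coefficientwise estimate only gives $H(g)\ll H^{N}$. The right bound goes through Mahler measure. One has $\max(1,|\beta_{i,A}|)\ll\prod_{\omega_i=A}\max(1,|\alpha_\omega|)$, and each $\omega$ lies in exactly one block $\{\omega:\omega_i=A\}$ for each $i\in[r]$. Hence $\mathcal{M}(g)\ll\mathcal{M}(f)^r$ and $H(g)\ll H^{r}$, and $\ll H^{r_\ell}$ for the factor of $g$ attached to a $G$-orbit of size $r_\ell$ in $[r]$.

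Second, you need an actual count of polynomials with transitive imprimitive Galois group. The paper imports it from \cite[Theorem~1.6]{bary2024probabilistic} (Lemma~\ref{lem:imprimitive-count}): there are $\ll X^{M+r'-1+\delta}$ monic polynomials of degree $Mr'$ and height $\le X$ whose Galois group is transitive $(M,r')$-imprimitive. Apply this with $X\asymp H^{r_\ell}$ and $r'=r_\ell$ on each $G$-orbit in $[r]$, and multiply. This gives $\ll H^{r(M+r-1)+\delta}$ possible splitting fields, which is exactly your target.

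None of your substitutes supplies this. Having an imprimitive Galois group is a thin-set condition, not membership in a subvariety, so a dimension-growth bound does not apply as stated. Indeed, the dimension $r(M+r-1)$ you propose exceeds $rM$, the dimension of the space of all monic $g$ of degree $rM$. A Schmidt-type field count, with discriminant bounded by a power of $H(g)$, gives far larger exponents. And ``trace relations plus divisor bounds'' is not an argument.

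There are also errors in the secondary steps.
\begin{itemize}
\item Replacing $f$ by $f(x+t)$ shifts every $\beta_{i,A}$ by the same amount $tM^{r-1}$. So it cannot make $g$ separable, or make $\beta_{i,A}$ generate the fixed field of $\Stab_G(\{\omega:\omega_i=A\})$. A trace can fail to be a primitive element; all the $\beta_{i,A}$ can even vanish. You need a primitive element built from all coefficients of $\prod_{\omega_i=A}(x-\alpha_\omega)$ with controlled Weil height, as in Proposition~\ref{prop:splitting-field-factorization-from-group-actions}.
\item $G$ need not be transitive on $[r]$ (e.g.\ the base group $S_m^r$). So $g$ can be reducible and must be split along the $G$-orbits in $[r]$; this is why the paper writes $K_f=K_{q_1}\cdots K_{q_s}$.
\item Your explanation of the $+1$ is wrong. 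Fixing $g$ fixes only the $r$ marginals $(\beta_{i,A})_A$ of the root vector $(\alpha_\omega)$. This leaves a linear family of dimension $M^r-r(M-1)-1$, which is more than one unless $M=r=2$. Moreover, the additive constant you name is actually pinned down by $g$, since translating $\alpha$ translates every $\beta_{i,A}$. The correct input is that for a fixed splitting field there are $\ll H^{1+\delta}$ such $f$. This follows by counting algebraic integers of Weil height $\ll H^{1/n}$ in each of the boundedly many degree-$n$ subfields (Proposition~\ref{prop:counting-via-splitting-field-factorization}).
\end{itemize}
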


We compare the exponents $e_1$ and $e_2$ of $H$ in \eqref{eq:bhargava-primitive-wreath-product} and \eqref{eq:thm-primitive-wreath-product-bound}, respectively.
Since $\binom{m}{k}=n^{1/r}$ and $r=\log_{\binom{m}{k}} n \le\log_2 n$,
\[
    e_2=r\left(\binom{m}{k}+r-1\right)+1+\epsilon \ll n^{1/r}\log n+(\log n)^2.
\]
In particular,
\[
    \frac{e_1}{e_2} \gg \frac{\sqrt{n}(\log n)^2}{n^{1/r}\log n+(\log n)^2} \gg
    \begin{cases}
        \log n, & r=2, \\
        n^{1/2-1/r}, & r>2.
    \end{cases}
\]
Hence, for $n$ sufficiently large, \eqref{eq:thm-primitive-wreath-product-bound} is sharper than \eqref{eq:bhargava-primitive-wreath-product}.

\subsection*{$k$-homogeneous and $k$-transitive actions}

Primitive non-elemental permutation groups of type $(1,m,k)$ are subgroups $G\le S_m$ acting transitively on $k$-subsets of $[m]$, which are classically called $k$-homogeneous groups.
Specializing \eqref{eq:bhargava-primitive-wreath-product} to $r=1$ gives
\begin{equation}
\label{eq:bhargava-k-homogeneous}
    \left|B\left(H;G,\binom{[m]}{k}\right)\right|\ll H^{c\binom{m}{k}^{1/2}\log^2\binom{m}{k}}.
\end{equation}
Since $(S_m,\binom{[m]}{k})\cong (S_m,\binom{[m]}{m-k})$ via the complement map, there is no loss of generality in assuming $k\le m/2$.
Our next result improves this bound.

\begin{thm}
\label{thm:k-homogeneous}
    Fix $\epsilon>0$ and $1\le k \le m/2$.
    Let $G\le S_m$ be $k$-homogeneous.
    Then
    \begin{equation}
    \label{eq:thm-k-homogeneous-bound}
        \left|B\left(H;G,\binom{[m]}{k}\right)\right|\ll H^{mk+1+\epsilon}.
    \end{equation}
\end{thm}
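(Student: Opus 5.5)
Let $n=\binom{m}{k}$ and let $f\in B(H;G,\binom{[m]}{k})$. The plan is to construct from $f$ an auxiliary monic polynomial $g$ of degree $m$ whose roots are indexed by $[m]$, such that $f$ is determined by $g$ up to few choices, and then count pairs $(g,f)$. Fix an identification of $\Omega_f$ with $\binom{[m]}{k}$ that is compatible with the Galois action. For $i\in[m]$ put
\[
\beta_i:=\sum_{A\ni i}\alpha_A,
\]
where $\alpha_A$ is the root of $f$ corresponding to $A$. Then $G_f$ permutes the $\beta_i$ through its action on $[m]$. So
\[
g(x):=\prod_{i\in[m]}(x-\beta_i)
\]
has integer coefficients, since it is Galois invariant and its coefficients are algebraic integers. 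Its roots are bounded by $\binom{m-1}{k-1}\max|\alpha_A|\ll H$, so $H(g)\ll H^{m}$ coefficientwise. The coefficient of $x^{m-j}$ is at most $\ll H^{j}$, so the number of possible $g$ is $\ll\prod_{j=1}^m H^{j}=H^{m(m+1)/2}$. This is too many, so $g$ cannot be used naively and needs refinement.

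The refinement is to count via the roots of $g$ rather than its coefficients. The expected exponent $mk+1$ suggests the following: $f$ is determined, up to boundedly many choices, by data of total size about $H^{mk}$, plus one extra degree of freedom. My approach would be to use the $\beta_i$, or more generally the symmetric functions $e_j$ of $\{\alpha_A:A\ni i\}$ for $j\le k$, to recover each $\alpha_A$. One can recover $\alpha_A$ as a polynomial expression in these $mk$ local quantities, for instance through intersections of the sets $\{A\ni i\}$. We then exploit the fact that $f$ has few free parameters once the "root-subset" structure is fixed. Concretely, I would invoke the general counting principle presumably developed earlier in the paper: if the roots of $f$ are $\QQ$-rational polynomial functions of the roots of an auxiliary polynomial $h$, then $|B(H;G,\Omega)|$ is bounded by the number of possible $h$ times $H^{\epsilon}$. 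This principle would come from a determinant-method or divisor-bound argument, which is where the $+\epsilon$ arises. Here I would take $h$ to be the degree-$mk$ polynomial
\[
\prod_{i}\prod_{j\le k}\bigl(x-\gamma_{i,j}\bigr),
\]
where $\gamma_{i,j}$ is built from the roots $\alpha_A$ with $A\ni i$. Its coefficients are small in aggregate, which yields $H^{mk}$ choices. One further coefficient, namely the trace of $f$ fixing a normalization, supplies the extra factor $H$.

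The main obstacle is the height bookkeeping. I must produce an auxiliary polynomial whose set of coefficient vectors has cardinality $\ll H^{mk+1+\epsilon}$, which rules out the naive $H^{\binom{m+1}{2}}$ count above. My first attempt would use the primitive element trick: replace $\beta_i$ by $\alpha_{A_i}$, choosing a single root per point, so that the degree-$m$ polynomial has roots of size $\ll H$. Its coefficients then vary over a box of size $\prod_j H^{j}$. I would cut this down by noting that $f$ is recovered from $g$ together with a bounded-height resolvent relation. After that I would apply a Bombieri–Pila type bound, counting integer points on the variety parametrizing compatible $(f,g)$, whose dimension should be $mk+1$.
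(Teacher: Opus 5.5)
Your starting point is essentially the paper's: the sets $\{A: i\in A\}$ and the degree-$m$ polynomial $g=\prod_i(x-\beta_i)$ are what the paper uses. But you discard $g$ because of a lossy estimate. The replacements you propose (a degree-$mk$ polynomial $h$ whose coefficients are ``small in aggregate'', a resolvent relation, a Bombieri--Pila count on an unspecified variety) are unjustified and also unnecessary. The missing idea is to bound the product of the roots of $g$ rather than each root separately. For monic $p$ write $M(p)=\prod_{p(\alpha)=0}\max(1,|\alpha|)$. Each root $\alpha_A$ occurs in exactly $k$ of the sums $\beta_i$ (namely for $i\in A$), so
\[
M(g)=\prod_{i=1}^m\max(1,|\beta_i|)\le\binom{m-1}{k-1}^m\prod_{i=1}^m\prod_{A\ni i}\max(1,|\alpha_A|)=\binom{m-1}{k-1}^m M(f)^k\ll H^k,
\]
using $M(f)\ll H$. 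Hence $H(g)\ll H^k$, and there are only $\ll H^{mk}$ possible $g$. You also need $g$ to determine $K_f$. The trace $\beta_1$ need not generate $K_f^{\Stab_G(1)}$; the $\beta_i$ may even coincide. So $\beta_1$ must be replaced by a primitive element built from the coefficients of $\prod_{A\ni 1}(x-\alpha_A)$, which obeys the same bound (the paper uses \cite[Lemma~4.5]{bary2024probabilistic}). Its minimal polynomial $q$ then has splitting field $K_f$, because $G$ acts faithfully on $[m]$ and, by Lemma~\ref{lem:k-homogeneous-implies-transitive}, transitively.

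The second gap is where the extra factor $H$ comes from. The ``general counting principle'' you invoke says a fixed auxiliary polynomial accounts for only $H^{\epsilon}$ polynomials $f$; this is false. Fix a $G$-extension $K$ and let $L=K^{\Stab_G(\omega)}$ for some $\omega\in\Omega$, a field of degree $n$. The lattice $\OO_L$ contains $\gg H$ elements whose conjugates all have absolute value at most $cH^{1/n}$, and most of these generate $L$. Their minimal polynomials are therefore $\gg H$ distinct elements of $B(H;G,\Omega)$ sharing one splitting field. Fixing the trace of $f$ does not repair this, since it still leaves $\gg H^{1-1/n}$ choices. What is true, and what the paper uses, is that the number of $f\in B(H;G,\Omega)$ with a prescribed splitting field $K$ is $\ll H^{1+\epsilon}$. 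A root of $f$ lies in one of the boundedly many degree-$n$ subfields of $K$, and by Mahler's inequality it has Weil height $\ll H^{1/n}$. Such integers number $\ll H^{1+\epsilon}$ by \cite[Lemma~4.7]{bary2024probabilistic}. Multiplying by the $\ll H^{mk}$ possible splitting fields from the first paragraph gives $H^{mk+1+\epsilon}$.
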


For $k=1$, the bound is immediate since $\left|B(H;G,[m])\right|\le b_m(H)\asymp H^m$.
We compare the exponents $e_3$ and $e_4$ of $H$ in \eqref{eq:bhargava-k-homogeneous} and \eqref{eq:thm-k-homogeneous-bound}, respectively.
Using the well-known bound $\binom{m}{k}\ge (m/k)^k$,
\[
    \frac{e_3}{e_4} \gg \frac{(m/k)^{k/2}\cdot k^2\log^2(m/k)}{mk} = (m/k)^{k/2-1}\log^2(m/k),
\]
Hence, for $m$ sufficiently large and any $2\le k\le m/2$, \eqref{eq:thm-k-homogeneous-bound} is sharper than \eqref{eq:bhargava-k-homogeneous}.

We next consider $k$-transitive actions, a strengthening of $k$-homogeneity.
For $m\ge 2$ and $1\le k\le m$, let
\[
    [m]_k:=\{(v_1,\ldots,v_k)\in [m]^k: v_i\neq v_j \text{ for } i\neq j\}
\]
denote the set of $k$-tuples of distinct elements of $[m]$.
Its cardinality is the falling factorial $(m)_k=\frac{m!}{(m-k)!}$.
The symmetric group $S_m$ acts on $[m]_k$ coordinatewise.
A subgroup $G\le S_m$ is called $k$-transitive if it acts transitively on $[m]_k$.
Clearly, a $k$-transitive subgroup of $S_m$ is also $k$-homogeneous.
In the other direction, by \cite[Theorem 2]{livingstone1965transitivity}, for $k\le m/2$, every $k$-homogeneous subgroup of $S_m$ is $(k-1)$-transitive.
\begin{thm}
\label{thm:k-transitive}
    Fix $\epsilon>0$ and $1\le k\le m$.
    Let $G\le S_m$ be $k$-transitive.
    Then
    \[
        \left|B(H;G,[m]_k)\right|\ll H^{m+1+\epsilon}.
    \]
\end{thm}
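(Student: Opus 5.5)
Here $f$ has degree $n=(m)_k$, and its root field $K=\QQ(\Omega_f)$ has Galois group $G$ acting on $\Omega_f\cong[m]_k$. The plan is to attach to $f$ an auxiliary degree-$m$ polynomial $g$ whose roots generate a subfield related to the $[m]$-action. We then count the pairs $(f,g)$: first count the possible $g$ (about $H^m$ up to logarithmic losses), then for each $g$ count the $f$ compatible with it (about $H^{1+\epsilon}$). Multiplying gives $H^{m+1+\epsilon}$.

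\emph{Step 1 (constructing $g$).} The stabilizer in $G$ of a point $(v_1,\dots,v_k)\in[m]_k$ is contained in the stabilizer $G_{v_1}$ of $v_1$. Hence, for a root $\alpha\in\Omega_f$ corresponding to $(v_1,\dots,v_k)$, the field $\QQ(\alpha)$ contains the fixed field $L_{v_1}$ of $G_{v_1}$, which has degree $m$ since $G$ is transitive on $[m]$. Concretely, set
\[
 \beta_{v}:=\sum_{\alpha\,\leftrightarrow\,(v,\ast,\dots,\ast)}\alpha,
\]
the sum of the $(m-1)_{k-1}$ roots whose first coordinate is $v$, and let $g(x):=\prod_{v\in[m]}(x-\beta_v)\in\ZZ[x]$. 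Then $g$ is monic of degree $m$, and its coefficients are polynomials in the roots of $f$, so they are $O(H^{C})$ for some $C$ depending only on $m$ and $k$. If the $\beta_v$ are not distinct, a standard perturbation handles it: replace the sum by a power sum $\sum\alpha^j$ or a twist $\sum(\alpha+t)^j$ with small $t$. The same shift $t$ should be used for all $f$ in a bounded family.

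\emph{Step 2 (counting $g$).} This is the step I expect to be the main obstacle. Height $H^C$ naively gives $H^{Cm}$ choices of $g$, which is far too many. To get about $H^m$ instead, I would use the fact that $f$ itself lies in a box with $O(H^n)$ points, and count fibres: each $g$ determines the field $L$, and $f$ must have a root in the compositum structure over $L$. The better route, as in Bhargava's method that the paper uses for the previous theorems, is to count number fields. The degree-$m$ field $L_{v_1}\subset\QQ(\alpha)$ has discriminant bounded by a power of $H$. We then count integral elements of $\QQ(\alpha)$ of height $\le H$ (in terms of archimedean embeddings, all $|\alpha|\ll H$) that generate a field of degree $n$ with the given Galois structure. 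An element with all conjugates bounded by $H$ in a field $F$ of degree $n$ lies in a lattice of covolume $\sqrt{|\mathrm{disc} F|}$. The count is then about $H^n/\sqrt{|\mathrm{disc}F|}+$ (lower-order successive-minima terms). I would first try to bound these lattice counts uniformly by exploiting that $\QQ(\alpha)$ is generated over $L$ with $[\QQ(\alpha):L]=(m-1)_{k-1}$ and that all archimedean data are controlled by the $m$ "coordinate" values. This is the point where I expect the real difficulty.

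\emph{Step 3 (counting $f$ given $g$).} Once $g$ is fixed, $K$ is the splitting field of $g$ (since $G$ acts faithfully on $[m]$), so $K$ is determined. Each root $\alpha$ of $f$ is an algebraic integer of $K$ fixed by a point stabilizer, with all archimedean absolute values $\ll H$. The number of such $\alpha$ is governed by a lattice point count in a rank-$n$ lattice. I would aim to show that after fixing the first-coordinate sums the remaining freedom is $O(H^{1+\epsilon})$, and then finish with a divisor-type bound. This Step~3 estimate is the part I am least sure can be made to work, and it would be the first thing to test on small cases such as $k=2$.
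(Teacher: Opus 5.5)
Your architecture matches the paper's: the fibres $\Lambda_v=\{(v,\ast,\dots,\ast)\}\subset\Omega_f$ give a degree-$m$ polynomial whose splitting field is $K_f$, since $G$ acts faithfully on $[m]$. You then count those polynomials, and then count $f$ per splitting field. But both quantitative steps are left open, and you are missing the idea that makes Step~2 immediate.

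\textbf{Step 2.} You bound the coefficients of $g$ only by $O(H^C)$ and then look for a discriminant/lattice argument. In fact $H(g)\ll H$ directly, because the fibres \emph{partition} $\Omega_f$. This is the paper's $\nu_1=m(m-1)_{k-1}/(m)_k=1$, i.e.\ $\prod_v f_{\Lambda_v}=f$. Let $M(\cdot)$ denote Mahler measure. Since $\max(1,|\beta_v|)\le(m-1)_{k-1}\prod_{\alpha\in\Lambda_v}\max(1,|\alpha|)$,
\[
\prod_{v\in[m]}\max(1,|\beta_v|)\ll\prod_{\alpha\in\Omega_f}\max(1,|\alpha|)=M(f)\le\sqrt{n+1}\,H,
\]
and each coefficient of $g=\prod_v(x-\beta_v)$ is at most $2^m$ times the left side. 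So $g\in B_m(cH)$, there are $\ll H^m$ possible splitting fields, and no field counting is needed.

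Your fix for coincident $\beta_v$ also needs changing: power sums $\sum\alpha^j$ raise this bound to $H^j$. Use instead a bounded-integer combination $\gamma_v$ of the coefficients of $f_{\Lambda_v}$ that generates the fixed field of $\Stab_G(v)$. Its conjugates $\gamma_w$ satisfy $|\gamma_w|\ll M(f_{\Lambda_w})$, so the same product bound holds. The combination may depend on $f$; no uniform shift $t$ is needed, since you only need each $K_f$ to be the splitting field of \emph{some} polynomial in $B_m(cH)$.

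\textbf{Step 3.} The constraint you propose, that all conjugates of a root $\alpha$ are $\ll H$, is far too weak. For fixed $K$, $L=\QQ(\alpha)$ is one of $O_G(1)$ fixed fields, and $\OO_L$ has $\asymp H^n$ points in that box. You must use the multiplicative constraint $\prod_\sigma\max(1,|\sigma\alpha|)=M(f)\ll H$ over the $n$ embeddings $\sigma$ of $L$, i.e.\ $H_\omega(\alpha)\ll H^{1/n}$. The number of $\alpha\in\OO_L$ satisfying it is $\ll H^{1+\epsilon}$ uniformly over fields $L$ of degree $n$. Roughly, $|N_{L/\QQ}(\alpha)|\ll H$ leaves $\ll H^{1+\epsilon}$ principal ideals, each with $\ll(\log H)^{O(1)}$ generators of that height; this is the Lemke Oliver--Thorne-type input the paper uses.

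As $\alpha$ determines $f$, each splitting field carries $\ll H^{1+\epsilon}$ polynomials $f$. Combined with the $\ll H^m$ bound above, this gives $H^{m+1+\epsilon}$. Without these two estimates the proposal remains an outline rather than a proof.
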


For $k=1$, Theorem~\ref{thm:k-transitive} is immediate, as $\left|B(H;G,[m])\right|\le b_m(H)\asymp H^m$.
The case $m=k=2$ is similarly immediate, as $\left|B(H;G,[2]_2)\right|\le b_2(H)\asymp H^2$.
We are unaware of previous bounds specific to $\left|B(H;G,[m]_k)\right|$ for $k$-transitive groups.
For any $k\ge 2$, since $G$ is a proper transitive subgroup of $S_{(m)_k}$, Bhargava \cite[Theorem~1]{bhargava2025galois} gives
\begin{equation}
\label{eq:bhargava-k-transitive}
    \left|B(H;G,[m]_k)\right|\ll H^{(m)_k-1}.
\end{equation}
For any $m\ge 3$ and $k\ge 2$, Theorem~\ref{thm:k-transitive} gives a sharper bound than \eqref{eq:bhargava-k-transitive}.

We note that the classification of finite simple groups implies that, other than $A_m$ and $S_m$, there are no $k$-transitive (hence $k$-homogeneous) subgroups of $S_m$ for $k$ larger than some absolute constant.
Nevertheless, Theorems~\ref{thm:k-homogeneous} and~\ref{thm:k-transitive} are not vacuous in this regime, as they give new bounds for $|B(H;G,\binom{[m]}{k})|$ and $|B(H;G,[m]_k)|$ for $G=A_m, S_m$.

\subsection*{Regular actions}
Recall that a permutation group $(G,\Omega)$ is called regular if the action is transitive and $|G|=|\Omega|$.
In this case $(G,\Omega)\cong (G,G)$, where $G$ acts on itself by left multiplication.
Bhargava \cite[Corollary~6]{bhargava2025galois} proved that for every regular permutation group $(G,\Omega)$,
\begin{equation}
\label{eq:bhargava-regular}
    \left|B(H;G,\Omega)\right|\ll H^{\frac{3|G|}{11}+1.164}.
\end{equation}

Since $S_m$ is $m$-transitive and $|[m]_m|=(m)_m=m!$, the permutation group $(S_m,[m]_m)$ is regular.
Theorem~\ref{thm:k-transitive} then gives
\[
    \left|B(H;S_m,[m]_m)\right|\ll H^{m+1+\epsilon},
\]
whereas \eqref{eq:bhargava-regular} only gives
\[
    \left|B(H;S_m,[m]_m)\right|\ll H^{\frac{3m!}{11}+1.164}.
\]

For a finite group $G$, let $\mu(G)$ denote the \emph{minimal faithful permutation degree} of $G$, that is,
\[
    \mu(G):=\min\{m\in \NN: G \textnormal{ admits a faithful action on an }m\textnormal{-set}\}.
\]
The following theorem generalizes the bound for $(S_m,[m]_m)$ to all regular actions.
\begin{thm}
\label{thm:regular}
    Fix $\epsilon>0$.
    Let $(G,\Omega)$ be a regular permutation group with minimal faithful permutation degree $\mu=\mu(G)$.
    Then
    \begin{equation}
    \label{eq:thm-regular-bound}
        \left|B(H;G,\Omega)\right|\ll H^{\mu+1+\epsilon}.
    \end{equation}
\end{thm}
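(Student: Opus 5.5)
The idea is to reduce from the regular action, whose degree is $|G|$, to a faithful action of small degree $\mu$. Let $f\in B(H;G,\Omega)$ with $(G_f,\Omega_f)\cong(G,G)$. Then $\QQ(\Omega_f)=\QQ(\alpha)$ for any root $\alpha$, and $[\QQ(\alpha):\QQ]=|G|$. Fix a faithful action of $G$ on a set $X$ of size $\mu$, with point stabilizers $K_1,\dots,K_s$ (one per orbit). Since the action is faithful, $\bigcap_{g,i} gK_ig^{-1}=1$.

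\textbf{Step 1: produce an auxiliary polynomial.} For each orbit, let $L_i:=\QQ(\Omega_f)^{K_i}$, and pick a primitive element $\beta_i\in L_i$ of small height, built integrally from $\alpha$. For example, take a trace-type sum $\beta_i=\sum_{\sigma\in K_i}\sigma(\alpha)$, translated by small integers if it fails to be primitive. Set $g:=\prod_i \mathrm{char}_{\beta_i}$, which is monic of degree $\mu$. Its splitting field is the compositum of the Galois closures of the $L_i$. This is the fixed field of $\bigcap_{g,i} gK_ig^{-1}=1$, i.e.\ the whole field $\QQ(\Omega_f)$. The roots of $\beta_i$ are sums of $|K_i|$ roots of $f$, and every root of $f$ is $\ll H$ in absolute value. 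Hence $H(g)\ll H^{C}$ for a constant $C$ depending on $G$.

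\textbf{Step 2: count.} The map $f\mapsto g$ sends $B(H;G,\Omega)$ into monic degree-$\mu$ polynomials whose splitting field $K$ is a $G$-extension. The count then goes in two stages. First count the possible fields $K$. Then, for each $K$, bound the number of $f$ of height $\le H$ whose roots generate $K$; such $f$ is the characteristic polynomial of an integral element of $\OO_K$ with all conjugates $\ll H$.

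The naive lattice-point count for a fixed $K$ gives about $H^{|G|}/\sqrt{|\mathrm{disc}K|}$. So I would instead count at the level of the degree-$\mu$ data: $f$ has to be recoverable from $g$ together with boundedly many additional parameters. The mechanism is that $\alpha$ is determined by its images in the fields $L_i$ only up to a finite ambiguity, not up to a continuous one. I expect this to be the main obstacle, and I expect the earlier machinery behind Theorem~\ref{thm:k-transitive} to handle it, since that theorem is exactly the case $(S_m,[m]_m)$ with $\mu=m$.

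\textbf{Step 3: extract the exponent.} Concretely, I would mimic the proof of Theorem~\ref{thm:k-transitive}. There, $f$ of degree $m!$ is controlled by a degree-$m$ polynomial $h$, whose roots $\theta_1,\dots,\theta_m$ are expressible from roots of $f$. Every root of $f$ lies in $\QQ(\theta_1,\dots,\theta_m)$ and is a polynomial in the $\theta_j$ of bounded shape. The count is then about $H^{m}$ choices of $h$, times $H^{1+\epsilon}$ choices for the remaining data, such as a single integer shift or a divisor-bound count producing the $\epsilon$. Replacing $S_m$ on $[m]$ with $G$ on $X$ of size $\mu$, the same argument gives $|B(H;G,\Omega)|\ll H^{\mu+1+\epsilon}$.

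The crux is making the height of $g$ genuinely $\ll H$ rather than $H^C$. Exponent $C>1$ would inflate $H^{\mu}$ to $H^{C\mu}$. To avoid this, I would choose the $\beta_i$ as roots of $f$ themselves when possible, or as linear forms in roots with bounded coefficients, so that they are $\ll H$. This is plausible because the elements of $X$ correspond to cosets, and a linear combination of the roots in a coset is $\ll H$.
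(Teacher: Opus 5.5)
Your outline has the same architecture as the paper. You build one auxiliary polynomial per orbit from cosets of a point stabilizer, so that their splitting fields compose to $K_f$, and then bound the number of fields times the number of $f$ per field. The gap is that both quantitative inputs giving the exponent $\mu+1+\epsilon$ are missing. The only bound you use, that each root of $f$ is $\ll H$, is too weak to supply either.

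\textbf{Height of the auxiliary polynomials.} Suppose every conjugate of $\beta_i$ is merely $\ll H$. Then $\mathrm{char}_{\beta_i}$, of degree $d_i=[G:K_i]$, can have constant term $\asymp H^{d_i}$. Moreover, there are $\asymp H^{d_i(d_i+1)/2}$ monic polynomials with $|a_{d_i-j}|\ll H^j$. So making the $\beta_i$ bounded linear forms in the roots does not give $H(g)\ll H$.

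The missing idea is multiplicative. The left cosets $A=\tau K_i$ partition $G$, so $\prod_A f_A=f$ with $f_A=\prod_{\alpha\in A}(x-\alpha)$. This is the identity $F_\ell=f^{\nu_\ell}$ with $\nu_\ell=1$ from Proposition~\ref{prop:splitting-field-factorization-from-group-actions}. Mahler measure is multiplicative, so each coefficient $b$ of $f_{K_i}$, whose conjugates are the $b^{(A)}$, satisfies $\prod_A\max(1,|b^{(A)}|)\ll\prod_A M(f_A)=M(f)\ll H$. That is, $H_\omega(b)\ll H^{1/d_i}$, and Theorem~\ref{thm:mahler} turns this into a minimal polynomial of height $\ll H$. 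Your coset sum does satisfy such a bound, but for this reason, not the one you give.

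The coset sum also need not generate $L_i$. Write $\alpha_\sigma=\sigma(\alpha)$. If $f(x)=h(x^2)$, then $-\alpha_\sigma=\alpha_{\sigma z}$ for an involution $z\in G$, and every coset sum vanishes when $z\in K_i$. Translating by an integer never changes $\QQ(\beta_i)$, so it cannot repair this. You need all coefficients of $f_{K_i}$ (Lemma~\ref{lem:coefficients-of-stable-orbit-generate-subfield}) and a primitive element built from them that keeps the same Weil-height bound.

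\textbf{Number of $f$ per splitting field.} This count is not established. The claims ``$f$ is recoverable from $g$ plus boundedly many parameters'' and ``$\alpha$ is determined by its images in the $L_i$ up to finite ambiguity'' are not arguments. The proof of Theorem~\ref{thm:k-transitive} does not work that way either: there, as here, the factor $H^{1+\epsilon}$ comes from Proposition~\ref{prop:counting-via-splitting-field-factorization}.

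Given $K=K_f$, a root $\alpha$ of $f$ satisfies $\QQ(\alpha)=K$ by regularity. Theorem~\ref{thm:mahler} gives $H_\omega(\alpha)\ll H^{1/n}$ with $n=|G|$, i.e.\ $\prod_\sigma\max(1,|\sigma(\alpha)|)\ll H$. The number of algebraic integers in a fixed degree-$n$ field satisfying this is $\ll_\delta H^{1+\delta}$, uniformly in the field \cite{lemke2020upper}, \cite[Lemma~4.7]{bary2024probabilistic}. Again the decisive constraint is the product over conjugates. Under your constraint that each conjugate is $\ll H$, the count is hopeless, as you noticed yourself.

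With both inputs in place, the number of possible fields is at most the number of tuples of minimal polynomials of degrees $d_i$ and height $\ll H$. That is $\ll H^{\sum_i d_i}=H^{\mu}$, which gives the bound $H^{\mu+1+\epsilon}$.
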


We compare the exponents of $H$ in \eqref{eq:bhargava-regular} and \eqref{eq:thm-regular-bound}.
Let $\iota(G)$ denote the minimal index of a cyclic subgroup of prime-power order in $G$.
Babai, Goodman and Pyber \cite[Theorem~1.1]{babai1993faithful} proved that there exists a function $\psi$ with $\psi(n)\to\infty$ as $n\to\infty$ such that
\[
    \mu(G)\le \frac{|G|}{\psi\bigl(\iota(G)\bigr)},
\]
for every finite group $G$.
In particular, there exists an absolute constant $C$ such that $\mu(G)<\frac{3|G|}{11}$, unless $G$ contains a cyclic subgroup of prime-power order and index at most $C$.
Hence, for all other groups, \eqref{eq:thm-regular-bound} is sharper than \eqref{eq:bhargava-regular}.

\subsection*{Methods}
The approach resembles the strategy of \cite{lemke2020upper}: 
first bound the number of fields arising from polynomials in $B(H;G,\Omega)$, then bound the number of such polynomials with a given splitting field.
For $f\in B(H;G,\Omega)$, let $K_f$ denote its splitting field.
For each fixed $G$-extension $K/\QQ$, the number of $f\in B(H;G,\Omega)$ with $K_f=K$ is $\ll H^{1+\epsilon}$ (cf. \cite[Proposition~2.2]{lemke2020upper}).
Thus it suffices to bound the number of possible splitting fields $K_f$ arising from $B(H;G,\Omega)$.

To this end, for each action $(G,\Omega)$ considered above, we construct a family $\Lambda$ of subsets of $\Omega$ that is closed under the induced action of $G$ on the power set $\mathcal{P}(\Omega)$, and on which $G$ acts faithfully.
Let $\Lambda^{(1)},\ldots,\Lambda^{(s)}$ be the orbits of $(G,\Lambda)$, and for each $\ell$ fix a representative $A_\ell\in\Lambda^{(\ell)}$.
For $f\in B(H;G,\Omega)$, we show that $K_f$ is the compositum of the fixed fields $E_\ell:=K_f^{\Stab_G(A_\ell)}$, and use the coefficients of $\prod_{\alpha\in A_\ell}(x-\alpha)$ to produce a generator of $E_\ell$ whose height is bounded by a power of $H$.
This reduces the counting problem for $(G,\Omega)$ to the actions $(G,\Lambda^{(\ell)})$.

\subsection*{Organization}
In Section~\ref{sec:group-actions} we prove the group-theoretic lemmas needed to construct the families $\Lambda$.
In Section~\ref{sec:counting} we show how such a family $\Lambda$ reduces the polynomial count to simpler actions.
In Section~\ref{sec:proofs} we combine these tools to prove Theorems~\ref{thm:primitive-wreath-product}-\ref{thm:regular}.

\subsection*{Acknowledgements}
The author thanks Danny Neftin for suggesting the primitive wreath product problem and for many insightful discussions.
The author thanks Daniele Garzoni for many helpful references and insights on the minimal faithful permutation degree.
The author thanks Lior Bary-Soroker for his guidance and many helpful comments.

The author is supported by the Israel Science Foundation (grant no.~366/23).

\section{Group actions}
\label{sec:group-actions}

\subsection*{Definitions and notation}

A \emph{finite group action} $(G,\Omega)$ consists of a finite group $G$, a finite set $\Omega$, and an action homomorphism $\pi:G\to\Sym(\Omega)$.
All group actions in this paper are tacitly assumed to be finite.
For $\sigma\in G$ and $\omega\in\Omega$, abbreviate $\pi(\sigma)(\omega)$ to $\sigma(\omega)$.
An action $(G,\Omega)$ is called a \emph{permutation group} if it is faithful, that is, if $\pi$ is injective.
Every group action $(G,\Omega)$ gives rise to a permutation group $(\pi(G),\Omega)$, called the \emph{permutation representation} of $(G,\Omega)$.

A \emph{morphism} $(\varphi,\varphi^*):(G_1,\Omega_1)\to (G_2,\Omega_2)$ consists of a group homomorphism $\varphi:G_1\to G_2$ and a map $\varphi^*:\Omega_1\to \Omega_2$ satisfying
\[
    \varphi(\sigma)\bigl(\varphi^*(\omega)\bigr)=\varphi^*\bigl(\sigma(\omega)\bigr)
\]
for all $\sigma\in G_1$ and $\omega\in \Omega_1$.
An \emph{isomorphism} is a morphism $(\varphi,\varphi^*)$ with $\varphi$ a group isomorphism and $\varphi^*$ a bijection.
For $H\le G$, the inclusion $H\hookrightarrow G$ defines a morphism $(H,\Omega)\to (G,\Omega)$.

For $\sigma\in G$ and $A\subset \Omega$, let $\sigma(A):=\{\sigma(\omega):\omega\in A\}$.
This defines an action $(G,\mathcal{P}(\Omega))$, where $\mathcal{P}(\Omega):=\{A\subset\Omega\}$ denotes the power set of $\Omega$.
The maps $\varphi(\sigma)=\sigma$ and $\varphi^*(\omega)=\{\omega\}$ define a morphism $(G,\Omega)\to(G,\mathcal{P}(\Omega))$.
$A\subset\Omega$ is said to be \emph{$(G,\Omega)$-stable} if $\sigma(A)=A$ for all $\sigma\in G$.
In that case, the \emph{restricted action} $(G,A)$ is well defined, and the inclusion $A\hookrightarrow\Omega$ defines a morphism $(G,A)\to(G,\Omega)$.
In particular, a family $\Lambda\subset\mathcal{P}(\Omega)$ is $(G,\mathcal{P}(\Omega))$-stable if and only if $\sigma(A)\in\Lambda$ for all $\sigma\in G$ and $A\in\Lambda$.

\begin{lemma}
\label{lem:equivariant-family-isomorphism}
    Let $(G,\Omega)$ and $(G,\Delta)$ be group actions, and let 
    \[
        \Lambda=\{\Lambda_\delta:\delta\in\Delta\}\subset\mathcal{P}(\Omega).
    \]
    If the map $\iota\colon \Delta\to \Lambda$, $\delta\mapsto \Lambda_\delta$ is injective and $\sigma(\Lambda_\delta)\subset \Lambda_{\sigma(\delta)}$ for all $\sigma\in G$ and $\delta\in \Delta$, then $\Lambda$ is $(G,\mathcal{P}(\Omega))$-stable and $(\mathrm{id},\iota):(G,\Delta)\to(G,\Lambda)$ is an isomorphism.
\end{lemma}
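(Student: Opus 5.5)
The plan is to first upgrade the inclusion $\sigma(\Lambda_\delta)\subset\Lambda_{\sigma(\delta)}$ to an equality, using finiteness of $\Omega$ and the group structure. Apply the hypothesis to $\sigma^{-1}$ and the point $\sigma(\delta)$:
\[
    \sigma^{-1}(\Lambda_{\sigma(\delta)})\subset\Lambda_{\sigma^{-1}\sigma(\delta)}=\Lambda_\delta .
\]
Applying the bijection $\sigma$ of $\Omega$, extended to $\mathcal{P}(\Omega)$, to both sides gives $\Lambda_{\sigma(\delta)}\subset\sigma(\Lambda_\delta)$. Together with the hypothesis, this yields
\[
    \sigma(\Lambda_\delta)=\Lambda_{\sigma(\delta)}\qquad\text{for all }\sigma\in G,\ \delta\in\Delta.
\]
Alternatively, one can note that $\sigma$ is a bijection, so $|\sigma(\Lambda_\delta)|=|\Lambda_\delta|$, and compare cardinalities along orbits. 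The inverse trick is cleaner, so I will use it.

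Stability is then immediate. Every element of $\Lambda$ has the form $\Lambda_\delta$, and $\sigma(\Lambda_\delta)=\Lambda_{\sigma(\delta)}\in\Lambda$, so $\Lambda$ is $(G,\mathcal{P}(\Omega))$-stable and the restricted action $(G,\Lambda)$ is well defined.

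Next I check that $(\mathrm{id},\iota)$ is an isomorphism.
\begin{itemize}
    \item The map $\mathrm{id}$ is a group isomorphism.
    \item The map $\iota$ is surjective by the definition of $\Lambda$ and injective by hypothesis, so it is a bijection.
    \item The equivariance condition reads $\sigma(\iota(\delta))=\iota(\sigma(\delta))$, which is exactly the equality $\sigma(\Lambda_\delta)=\Lambda_{\sigma(\delta)}$ established above.
\end{itemize}

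The only step with any content is upgrading the inclusion to an equality. Everything else is unwinding the definitions of morphism and isomorphism from this section.
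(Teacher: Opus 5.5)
Your proposal is correct and matches the paper's proof: you apply the hypothesis to $\sigma^{-1}$ and $\sigma(\delta)$ to upgrade the inclusion to $\sigma(\Lambda_\delta)=\Lambda_{\sigma(\delta)}$, then read off stability, bijectivity of $\iota$, and equivariance. One small correction to your opening sentence: the inverse argument does not use finiteness of $\Omega$, only that $G$ is a group acting by bijections.
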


\begin{proof}
    Applying the hypothesis to $\sigma^{-1}$ and $\sigma(\delta)$ gives $\sigma^{-1}(\Lambda_{\sigma(\delta)})\subset \Lambda_\delta$, hence $\Lambda_{\sigma(\delta)}\subset \sigma(\Lambda_\delta)$.
    Combined with $\sigma(\Lambda_\delta)\subset \Lambda_{\sigma(\delta)}$, this gives $\sigma(\Lambda_\delta)=\Lambda_{\sigma(\delta)}$.
    In particular, $\Lambda$ is $(G,\mathcal{P}(\Omega))$-stable.
    Since $\iota$ is injective, it is a bijection $\Delta\to\Lambda$, and the equality $\sigma(\Lambda_\delta)=\Lambda_{\sigma(\delta)}$ shows that $(\mathrm{id},\iota)$ is a morphism.
\end{proof}

Denote the $G$-saturation of $A$, the setwise stabilizer of $A$, and the pointwise stabilizer of $A$ by
\begin{align*}
    G\cdot A &:= \{\sigma(\omega):\sigma\in G,\omega\in A\}, \\
    \Stab_G(A) &:= \{\sigma\in G:\sigma(A)=A\}, \\
    \Fix_G(A) &:= \{\sigma\in G: \forall \omega\in A,\ \sigma(\omega)=\omega\}.
\end{align*}
For $\omega\in \Omega$, by abuse of notation denote the orbit of $\omega$ and the stabilizer of $\omega$ by
\[
    G\cdot \omega:=G\cdot \{\omega\},\quad \Stab_G(\omega):=\Stab_G(\{\omega\}).
\]

For $m\in \NN$, denote $[m]=\{1,\ldots,m\}$.
For $1\le k\le m$, set
\[
    \binom{[m]}{k}:=\{A\subset [m]:|A|=k\},\qquad
    [m]_k:=\{(v_1,\ldots,v_k)\in [m]^k: v_i\neq v_j\ \text{for}\ i\neq j\},
\]
whose cardinalities are $\binom{m}{k}=\frac{m!}{k!(m-k)!}$ and $(m)_k=\frac{m!}{(m-k)!}$, respectively.
The symmetric group $S_m$ acts on $\binom{[m]}{k}$ via the action on $\mathcal{P}([m])$ and on $[m]_k$ coordinatewise.

\subsection*{Wreath products}

\begin{definition}
\label{def:wreath-product}
Let $m,r\geq 2$.
Let $S_r$ act on $S_m^r$ by permuting the factors:
for $\rho\in S_r$ and $\tau=(\tau_1,\ldots,\tau_r)\in S_m^r$, set
\[
    \rho\cdot\tau:=(\tau_{\rho^{-1}(1)},\ldots,\tau_{\rho^{-1}(r)}).
\]
Define the \emph{wreath product} of $S_m$ and $S_r$ as the semidirect product of $S_m^r$ and $S_r$ with respect to this action
\[
    S_m\wr S_r:=S_m^r\rtimes S_r.
\]
Thus, every element of $S_m\wr S_r$ may be uniquely written as $(\tau,\rho)$ with $\tau\in S_m^r$ and $\rho\in S_r$, and the group operation is
\[
    (\tau_1,\rho_1)(\tau_2,\rho_2)=(\tau_1(\rho_1\cdot\tau_2), \rho_1\rho_2).
\]
Set $R:=S_m\wr S_r$.
\end{definition}

\begin{definition}
\label{def:actions-of-wreath-product}
    The group $R$ carries several natural actions.
    \begin{enumerate}
        \item \emph{Top action}:
        Set $\Omega^T(m,r):=[r]$.
        For $i\in \Omega^T(m,r)$ and $(\tau,\rho)\in R$, define
        \[
            (\tau,\rho)(i):=\rho(i).
        \]
        \item \emph{Primitive action}:
        For $1\le k\le m/2$, set $\Omega^P(m,r,k):=\binom{[m]}{k}^r$.
        For $(A_1,\ldots,A_r)\in \Omega^P(m,r,k)$ and $(\tau,\rho)\in R$ with $\tau=(\tau_1,\ldots,\tau_r)$, define
        \[
            (\tau,\rho)(A_1,\ldots,A_r):=(\tau_1(A_{\rho^{-1}(1)}),\ldots,\tau_r(A_{\rho^{-1}(r)})).
        \]
        \item \emph{Imprimitive action}:
        Set $\Omega^I(m,r):=[r]\times[m]$.
        For $(i,j)\in \Omega^I(m,r)$ and $(\tau,\rho)\in R$ with $\tau=(\tau_1,\ldots,\tau_r)$, define
        \[
            (\tau,\rho)(i,j):=(\rho(i),\tau_{\rho(i)}(j)).
        \]
    \end{enumerate}
    For $\omega\in \Omega^T(m,r)\cup\Omega^P(m,r,k)\cup\Omega^I(m,r)$, the action of $R$ on $\omega$ is understood from context.

    When $m,r$ are clear, write $\Omega^T,\Omega^I,\Omega^P$ for $\Omega^T(m,r),\Omega^I(m,r),\Omega^P(m,r,1)=[m]^r$, respectively.
\end{definition}

\begin{lemma}
\label{lem:imprimitive-action-faithful}
    $(R,\Omega^I)$ is faithful.
\end{lemma}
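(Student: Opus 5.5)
We must show that the action homomorphism $\pi\colon R\to\Sym(\Omega^I)$ is injective. The plan is to take an element $(\tau,\rho)\in R$ with $\tau=(\tau_1,\ldots,\tau_r)$ that fixes every point of $\Omega^I=[r]\times[m]$, and to deduce that $\rho$ is the identity of $S_r$ and that each $\tau_i$ is the identity of $S_m$. Before doing so, we should check that Definition~\ref{def:actions-of-wreath-product}(3) really defines an action compatible with the multiplication in Definition~\ref{def:wreath-product}. Since that formula is given as the definition of the imprimitive action, we take it as an action and only verify injectivity of $\pi$.

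The argument has two steps. First, for any $(i,j)\in\Omega^I$ we have $(\tau,\rho)(i,j)=(\rho(i),\tau_{\rho(i)}(j))$. If this equals $(i,j)$, comparing first coordinates gives $\rho(i)=i$ for every $i\in[r]$, so $\rho=\mathrm{id}$. Second, comparing second coordinates, and using $\rho(i)=i$, gives $\tau_i(j)=j$ for all $i\in[r]$ and $j\in[m]$. Hence every $\tau_i=\mathrm{id}$, and $(\tau,\rho)$ is the identity element of $R$. This proves that $\ker\pi$ is trivial.

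There is no real obstacle here. The only point requiring care is to use the exact form of the definition, in which the second coordinate is acted on by $\tau_{\rho(i)}$ rather than by $\tau_i$. Because we first establish $\rho=\mathrm{id}$, this distinction disappears in the second step.
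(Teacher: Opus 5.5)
Your proof is correct and is essentially the same as the paper's: first coordinates force $\rho=\mathrm{id}$, and second coordinates then give $\tau_i(j)=j$ for all $i,j$, so each $\tau_i=\mathrm{id}$. Your decision not to verify that the formula defines an action is also fine, since faithfulness only concerns injectivity and the formula is in any case compatible with the multiplication in the wreath product.
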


\begin{proof}
    Let $(\tau,\rho)\in R$ fix every $(i,j)\in \Omega^I$.
    Then $\rho(i)=i$ for all $i\in[r]$.
    Hence $\rho=\mathrm{id}$.
    The equality $(\tau,\rho)(i,j)=(i,j)$ then becomes $\tau_i(j)=j$ for all $i\in [r]$ and $j\in [m]$.
    Hence $\tau_i=\mathrm{id}$ for all $i$, so $(\tau,\rho)=1$.
\end{proof}

\begin{lemma}
\label{lem:primitive-transitive-imprimitive-orbit}
    Let $G\le R$ with $(G,\Omega^P)$ transitive.
    Then for every $(i,j)\in\Omega^I$,
    \[
        G\cdot (i,j)=(G\cdot i)\times[m].
    \]
\end{lemma}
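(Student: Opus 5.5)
The inclusion $G\cdot(i,j)\subset (G\cdot i)\times[m]$ is immediate: for $(\tau,\rho)\in G$ we have $(\tau,\rho)(i,j)=(\rho(i),\tau_{\rho(i)}(j))$, and $\rho(i)$ is exactly $(\tau,\rho)(i)$ under the top action. So the substance is the reverse inclusion. Here $\Omega^P=[m]^r$ (the case $k=1$ by the convention in Definition~\ref{def:actions-of-wreath-product}), and transitivity of $G$ on $[m]^r$ is the only input.

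For the reverse inclusion, I would first show that $\{i\}\times[m]\subset G\cdot(i,j)$, i.e.\ that every $(i,j')$ lies in the orbit of $(i,j)$. Fix $j'\in[m]$. By transitivity on $[m]^r$ there is $g=(\tau,\rho)\in G$ sending some tuple $a=(a_1,\ldots,a_r)$ to some tuple $b$, and I want to choose $a,b$ so that $\rho(i)=i$ and $\tau_i(j)=j'$. The point is that the $i$-th coordinate of $g(a)$ is $\tau_i(a_{\rho^{-1}(i)})$, so the primitive action only sees $\rho$ through which coordinate is moved into slot $i$, and I cannot force $\rho(i)=i$ directly.

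To avoid having to control $\rho$, I would instead take $a=(j,j,\ldots,j)$ and $b=(j',j',\ldots,j')$. Transitivity gives $(\tau,\rho)\in G$ with $(\tau,\rho)(a)=b$, i.e.\ $\tau_\ell(j)=j'$ for every $\ell\in[r]$, whatever $\rho$ is. Then
\[
    (\tau,\rho)(i,j)=(\rho(i),\tau_{\rho(i)}(j))=(\rho(i),j').
\]
This shows that for every $j'$ there is some $i'\in G\cdot i$ with $(i',j')\in G\cdot(i,j)$. Combined with an argument moving between fibres, this will give the full product.

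To finish, I would show that the orbit $O=G\cdot(i,j)$ meets each fibre $\{i'\}\times[m]$, $i'\in G\cdot i$, in the whole fibre. It meets every such fibre, since for $(\sigma,\pi)\in G$ with $\pi(i)=i'$ we get $(\sigma,\pi)(i,j)\in\{i'\}\times[m]$. For fullness, take any point $(i',j_0)\in O$ and apply the constant-tuple argument to $(i',j_0)$: for each $j'$, some $g\in G$ with $\tau_\ell(j_0)=j'$ for all $\ell$ sends $(i',j_0)$ to $(\rho(i'),j')$. The remaining difficulty, which I expect to be the main obstacle, is that $\rho(i')$ need not equal $i'$.

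For that step I would use a counting argument. Since $O$ is an orbit it is $G$-stable. Its intersection with the fibre over $i'$ has size $c_{i'}$, which is independent of $i'$, since $G$ permutes the fibres over the top orbit $G\cdot i$ transitively and preserves $O$. Fix $j'$ and take $(\tau,\rho)$ with $\tau_\ell(j_0)=j'$ for all $\ell$ and all $j_0$ in a fixed set. Any such element maps each point $(i'',j_0)\in O$ to $(\rho(i''),j')$, so the set $\{(i'',j')\}\cap O$ is nonempty in the projection sense. Pushing this through an averaging or double-counting argument over $G$ should show $c_{i'}=m$.

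An alternative for the final step is to choose $a$ with $a_\ell=j$ for all $\ell\in G\cdot i$ and $b$ with $b_\ell=j'$ for all $\ell\in G\cdot i$ directly. One then reads off $\tau_{\rho(i)}(j)=j'$, where $\rho(i)$ ranges within $G\cdot i$. Composing with an element of $\Stab_G(i)$ that fixes the top, if needed, should land the image in the fibre over $i$ itself. I would try this composition first.
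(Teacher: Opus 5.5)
There is a genuine gap in the step meant to show $c_{i'}=m$. Your first inclusion, the constant-tuple computation, and the fact that $c_{i'}$ does not depend on $i'\in G\cdot i$ are all correct. The ``averaging or double-counting'' step, however, is not an argument. The element you ask for, with $\tau_\ell(j_0)=j'$ for all $j_0$ in a set, cannot exist once that set has two points, because $\tau_\ell$ is a permutation.

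More seriously, the only consequences of transitivity you use concern tuples that are constant (on $[r]$ or on $G\cdot i$), and these cannot imply the lemma. In $S_2\wr S_2$ let $G=\langle g\rangle$ with $g=(\tau,\rho)$, $\tau=((12),(12))$, $\rho=(12)$. Then $g$ swaps the constant tuples $(1,1)$ and $(2,2)$ of $\Omega^P$. The fibre counts are constant, and every second coordinate occurs in each orbit. Yet $G\cdot(1,1)=\{(1,1),(2,2)\}$ in $\Omega^I$ meets each fibre in only one point. This does not contradict the lemma, since $g$ fixes $(1,2)\in\Omega^P$, so $G$ is not transitive on $\Omega^P$.

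Your alternative fails for the same reason. Composing with an element whose top part fixes $i$ cannot move $\rho(i)$ back to $i$. An element that does move it back changes the second coordinate in an uncontrolled way. In this example the top stabilizer of $1$ is trivial.

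The missing idea is to apply transitivity to a \emph{non-constant} target tuple adapted to $O=G\cdot(i,j)$. Suppose every fibre over $t\in G\cdot i$ missed a point $(t,j_t)$. Take $v\in[m]^r$ with $v_t=j_t$ for $t\in G\cdot i$, and $\sigma=(\tau,\rho)\in G$ with $\sigma(j,\ldots,j)=v$. Since $\rho(i)\in G\cdot i$, you get $\tau_{\rho(i)}(j)=v_{\rho(i)}=j_{\rho(i)}$. Hence $\sigma(i,j)=(\rho(i),j_{\rho(i)})\notin O$, a contradiction.

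Because $v$ is prescribed on every coordinate of the top orbit, it no longer matters where $\rho$ sends $i$; this removes exactly the obstacle you identified. So some fibre $\{i_0\}\times[m]$ lies in $O$, and translating by elements of $G$ gives all of $(G\cdot i)\times[m]$. This is the paper's proof. Equivalently, the set of $a\in[m]^r$ with $(t,a_t)\in O$ for all $t\in G\cdot i$ is nonempty and $G$-stable, hence equals $[m]^r$.
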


\begin{proof}
    Clearly, $G\cdot (i,j)\subset (G\cdot i)\times[m]$.

    It suffices to find $i_0\in G\cdot i$ such that $\{i_0\}\times[m]\subset G\cdot (i,j)$.
    Indeed, given such an $i_0$, for every $t\in G\cdot i=G\cdot i_0$ take $\sigma_t\in G$ with $\sigma_t(i_0)=t$.
    Then
    \[
        \{t\}\times[m]=\sigma_t(\{i_0\}\times[m])\subset G\cdot (i,j).
    \]

    Assume by contradiction that for every $t\in G\cdot i$ there exists $j_t\in [m]$ such that $(t,j_t)\in \Omega^I\setminus (G\cdot (i,j))$.
    Set
    \[
        v=(v_1,\ldots,v_r)\in \Omega^P,\qquad 
        v_t=\begin{cases}
            j_t,& t\in G\cdot i,\\
            1,& t\in \Omega^T\setminus (G\cdot i).
        \end{cases}
    \]
    Since $(G,\Omega^P)$ is transitive, there exists $\sigma\in G$ such that $\sigma(j,\ldots,j)=v$.
    Then
    \[
        \sigma(i,j)\in \{(t,j_t):t\in G\cdot i\}\subset \Omega^I\setminus (G\cdot (i,j)),
    \]
    a contradiction.
\end{proof}

\begin{rem}
\label{rem:primitive-transitive-does-not-imply-top-transitive}
    For $G\le R$, transitivity of $(G,\Omega^P)$ does not imply transitivity of $(G,\Omega^I)$ or $(G,\Omega^T)$.
    For example, the base group $S_m^r\le R$ acts transitively on $\Omega^P$, fixes every point of $\Omega^T$ and has $r$ orbits on $\Omega^I$.
\end{rem}

\begin{lemma}
\label{lem:induced-subgroups-from-top-orbits}
    Let $G\le R$ with $(G,\Omega^P)$ transitive.
    Let $G\cdot i_1,\ldots,G\cdot i_s$ be the distinct orbits of $(G,\Omega^T)$.
    Fix $1\le \ell\le s$ and set $r_\ell:=|G\cdot i_\ell|$.
    Then the subset $(G\cdot i_\ell)\times[m]\subset \Omega^I$ is $(G,\Omega^I)$-stable.
    Moreover, let $(G_\ell,(G\cdot i_\ell)\times[m])$ denote the permutation representation of $(G,(G\cdot i_\ell)\times[m])$.
    Then there exists $\Gamma_\ell\le S_m\wr S_{r_\ell}$, with $(\Gamma_\ell, \Omega^I(m,r_\ell))$ transitive, such that
    \[
        (G_\ell,(G\cdot i_\ell)\times[m])\cong (\Gamma_\ell,\Omega^I(m,r_\ell)).
    \]
\end{lemma}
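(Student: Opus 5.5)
Stability is immediate from the definition of the action. For $(\tau,\rho)\in G$ and $(t,j)\in (G\cdot i_\ell)\times[m]$ we have $(\tau,\rho)(t,j)=(\rho(t),\tau_{\rho(t)}(j))$. Here $\rho(t)=(\tau,\rho)(t)\in G\cdot t=G\cdot i_\ell$, so the subset $(G\cdot i_\ell)\times[m]$ is $(G,\Omega^I)$-stable.

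For the isomorphism, I fix a bijection $\beta\colon [r_\ell]\to G\cdot i_\ell$ and define $\varphi^*\colon (G\cdot i_\ell)\times[m]\to\Omega^I(m,r_\ell)$ by $\varphi^*(t,j)=(\beta^{-1}(t),j)$. Next I define $\psi\colon G\to S_m\wr S_{r_\ell}$ by
\[
  \psi(\tau,\rho):=\bigl((\tau_{\beta(1)},\ldots,\tau_{\beta(r_\ell)}),\ \beta^{-1}\circ\rho|_{G\cdot i_\ell}\circ\beta\bigr).
\]
This makes sense because $\rho$ restricts to a permutation of the $G$-stable set $G\cdot i_\ell$. I then check the equivariance
\[
  \psi(\sigma)\bigl(\varphi^*(t,j)\bigr)=\varphi^*\bigl(\sigma(t,j)\bigr).
\]
With $u=\beta^{-1}(t)$ and $\rho'=\beta^{-1}\rho\beta$, the left side is $(\rho'(u),\tau_{\beta(\rho'(u))}(j))=(\beta^{-1}\rho(t),\tau_{\rho(t)}(j))$, which is the right side.

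The step I expect to need the most care is that $\psi$ is a homomorphism. The cleanest route is to avoid computing the semidirect product multiplication directly. Since $(S_m\wr S_{r_\ell},\Omega^I(m,r_\ell))$ is faithful by Lemma~\ref{lem:imprimitive-action-faithful}, and since the equivariance identity holds for every $\sigma$, we get
\[
  \psi(\sigma_1\sigma_2)\ \text{and}\ \psi(\sigma_1)\psi(\sigma_2)
\]
act identically on $\Omega^I(m,r_\ell)$, because both transport the action of $\sigma_1\sigma_2$ through the bijection $\varphi^*$. Faithfulness then gives $\psi(\sigma_1\sigma_2)=\psi(\sigma_1)\psi(\sigma_2)$, so $\psi$ is a homomorphism.

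Now let $\Gamma_\ell:=\psi(G)$. The kernel of $\psi$ is exactly the kernel of the action of $G$ on $(G\cdot i_\ell)\times[m]$: again by faithfulness of the target action, $\psi(\sigma)=1$ if and only if $\sigma$ acts trivially on this set. Hence $\psi$ induces an isomorphism $G_\ell\cong\Gamma_\ell$. Together with the bijection $\varphi^*$, this gives the isomorphism of permutation groups $(G_\ell,(G\cdot i_\ell)\times[m])\cong(\Gamma_\ell,\Omega^I(m,r_\ell))$.

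Finally, transitivity follows from Lemma~\ref{lem:primitive-transitive-imprimitive-orbit}. Since $(G,\Omega^P)$ is transitive, $G\cdot(i_\ell,j)=(G\cdot i_\ell)\times[m]$, so $G$ is transitive on this set. Transporting through $\varphi^*$ shows that $\Gamma_\ell$ is transitive on $\Omega^I(m,r_\ell)$.
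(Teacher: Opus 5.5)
Your proof is correct, but it reaches the isomorphism by a different route than the paper.

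The paper first uses Lemma~\ref{lem:primitive-transitive-imprimitive-orbit} to see that $(G\cdot i_\ell)\times[m]$ is a single $G$-orbit, which gives stability and transitivity at once. It then notes that $G_\ell$ preserves the block system $\bigl\{\{i\}\times[m]: i\in G\cdot i_\ell\bigr\}$ and cites the general wreath product embedding theorem \cite[Theorem~2.6]{meldrum1995wreath} as a black box.

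You instead use that $G$ is already given inside $S_m\wr S_r$, with each block $\{t\}\times[m]$ already identified with $[m]$. This lets you write the embedding down by hand: keep the coordinates $\tau_t$ for $t\in G\cdot i_\ell$ and restrict $\rho$ to the orbit. Your faithfulness trick for the homomorphism property sidesteps the semidirect-product computation. It implicitly relies on the imprimitive formula being a genuine left action for the paper's multiplication rule $(\tau_1,\rho_1)(\tau_2,\rho_2)=(\tau_1(\rho_1\cdot\tau_2),\rho_1\rho_2)$, which does check out. Lemma~\ref{lem:imprimitive-action-faithful} also holds verbatim when $r_\ell=1$.

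Your route is self-contained and describes $\Gamma_\ell$ concretely as the image of $G$ under a coordinate-restriction map. It also makes clear that stability and the embedding hold for every $G\le R$, with transitivity on $\Omega^P$ needed only for transitivity of $\Gamma_\ell$. The paper's route is shorter, and the cited theorem applies to an abstract transitive group with a block system, not only one already presented inside a wreath product. The cost is dependence on an external result.
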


\begin{proof}
    Fix $1\le \ell\le s$.
    By Lemma~\ref{lem:primitive-transitive-imprimitive-orbit}, $(G\cdot i_\ell)\times[m]$ is an orbit of $G$, hence $(G,\Omega^I)$-stable.
    Moreover, $(G_\ell,(G\cdot i_\ell)\times[m])$ is transitive and preserves the block system $\left\{\{i\}\times[m]:i\in G\cdot i_\ell\right\}$.
    By the wreath product embedding theorem \cite[Theorem~2.6]{meldrum1995wreath},
    \[
        (G_\ell,(G\cdot i_\ell)\times[m])\cong (\Gamma_\ell,\Omega^I(m,r_\ell)),
    \]
    for some $\Gamma_\ell\le S_m\wr S_{r_\ell}$, as claimed.
\end{proof}

\begin{lemma}
\label{lem:imprimitive-stabilizer-primitive-stabilizer}
    Let $G\le R$.
    For $(i,j)\in \Omega^I$ define
    \[
        \Lambda_{(i,j)}:=\{v\in \Omega^P:v_i=j\},\qquad \Lambda:=\{\Lambda_{(i,j)}:(i,j)\in \Omega^I\}.
    \]
    The action $(G,\mathcal{P}(\Omega^P))$ induces an action $(G,\Lambda)$, and the map $(i,j)\mapsto \Lambda_{(i,j)}$ defines an isomorphism $(G,\Omega^I)\cong (G,\Lambda)$.
\end{lemma}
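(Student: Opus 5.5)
The plan is to apply Lemma~\ref{lem:equivariant-family-isomorphism} with $\Omega=\Omega^P$ (where $\Omega^P=[m]^r$, i.e.\ $k=1$), $\Delta=\Omega^I$, and $\Lambda_\delta=\Lambda_{(i,j)}$. That lemma gives both stability of $\Lambda$ under $(G,\mathcal{P}(\Omega^P))$ and the isomorphism $(\mathrm{id},\iota)$ once two hypotheses hold: injectivity of $(i,j)\mapsto\Lambda_{(i,j)}$, and the inclusion $\sigma(\Lambda_{(i,j)})\subset\Lambda_{\sigma(i,j)}$ for all $\sigma\in G$. Since $G\le R$, it suffices to check both for all of $R$ and then restrict the action to $G$.

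\emph{Equivariance.} Let $\sigma=(\tau,\rho)\in R$ and $v\in\Lambda_{(i,j)}$, so $v_i=j$. By Definition~\ref{def:actions-of-wreath-product}, the $\rho(i)$-th coordinate of $\sigma(v)$ is $\tau_{\rho(i)}(v_{\rho^{-1}(\rho(i))})=\tau_{\rho(i)}(v_i)=\tau_{\rho(i)}(j)$. On the other hand, $\sigma(i,j)=(\rho(i),\tau_{\rho(i)}(j))$. Hence $\sigma(v)\in\Lambda_{\sigma(i,j)}$, which is the required inclusion. This is a direct unwinding of the definitions; the only care needed is matching the index convention $\rho^{-1}$ in the primitive action against $\tau_{\rho(i)}$ in the imprimitive action.

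\emph{Injectivity.} Suppose $\Lambda_{(i,j)}=\Lambda_{(i',j')}$. First, the set is nonempty: the tuple with $i$-th coordinate $j$ and all other coordinates $1$ lies in it. If $i\ne i'$, I would use $r\ge2$ to pick $v$ with $v_i=j$ and $v_{i'}\neq j'$, which is possible because $m\ge2$. Then $v\in\Lambda_{(i,j)}\setminus\Lambda_{(i',j')}$, a contradiction. So $i=i'$, and then $j=v_i=j'$ for any $v$ in the common set.

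With both hypotheses verified, Lemma~\ref{lem:equivariant-family-isomorphism} yields the claim. I do not expect a genuine obstacle; the main point to get right is the index bookkeeping in the equivariance computation.
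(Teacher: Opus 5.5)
Your proposal is correct and matches the paper's proof essentially step for step. You check injectivity of $(i,j)\mapsto\Lambda_{(i,j)}$ by the same case split on whether the first coordinates agree, using $m\ge 2$ to pick a value avoiding $j'$; you check $\sigma(\Lambda_{(i,j)})\subset\Lambda_{\sigma(i,j)}$ by the same unwinding of the $\rho^{-1}$ convention; and you conclude via Lemma~\ref{lem:equivariant-family-isomorphism}.
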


\begin{proof}
    For $(i,j)\neq (k,\ell)$, the sets $\Lambda_{(i,j)}$ and $\Lambda_{(k,\ell)}$ are distinct.
    Indeed, if $i=k$ and $j\neq \ell$, then any $v\in \Lambda_{(i,j)}$ satisfies $v_k=v_i=j\neq \ell$, so $v\notin \Lambda_{(k,\ell)}$.
    If $i\neq k$, let $u\in [m]\setminus \{\ell\}$.
    Define $v\in \Omega^P$ by
    \[
        v_t:=\begin{cases}
            j,& t=i,\\
            u,& t=k,\\
            1,& \textnormal{otherwise}.
        \end{cases}
    \]
    Then $v\in \Lambda_{(i,j)}\setminus \Lambda_{(k,\ell)}$.
    Therefore the map $\iota:\Omega^I\to \Lambda$, $(i,j)\mapsto \Lambda_{(i,j)}$, is injective.

    Fix $\sigma=(\tau,\rho)\in G$ and $(i,j)\in \Omega^I$.
    Write $\sigma(i,j)=(k,\ell)$.
    If $v\in \Lambda_{(i,j)}$, then $v_i=j$, and therefore
    \[
        (\sigma v)_k=(\sigma v)_{\rho(i)}=\tau_{\rho(i)}\bigl(v_{\rho^{-1}(\rho(i))}\bigr)=\tau_{\rho(i)}(v_i)=\tau_{\rho(i)}(j)=\ell.
    \]
    Hence $\sigma v\in \Lambda_{(k,\ell)}=\Lambda_{\sigma(i,j)}$, so $\sigma(\Lambda_{(i,j)})\subset \Lambda_{\sigma(i,j)}$.
    By Lemma~\ref{lem:equivariant-family-isomorphism}, $\Lambda$ is $(G,\mathcal{P}(\Omega^P))$-stable and $(\mathrm{id},\iota)$ is an isomorphism.
\end{proof}

\subsection*{$k$-homogeneous actions}

\begin{lemma}
\label{lem:k-homogeneous-implies-transitive}
    Let $m\ge 2$, $1\le k\le m/2$, and $G\le S_m$.
    If $(G,\binom{[m]}{k})$ is transitive, then $(G,[m])$ is transitive.
\end{lemma}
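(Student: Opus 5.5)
I will argue by contradiction, in the same spirit as Lemma~\ref{lem:primitive-transitive-imprimitive-orbit}. Suppose $(G,[m])$ is not transitive, and let $O\subset[m]$ be an orbit of $G$ of minimal size, say $|O|=a$, with complement $O^c=[m]\setminus O$ of size $m-a\ge a$. Because $G$ preserves $O$, it also preserves $|A\cap O|$ for every $A\in\binom{[m]}{k}$. Hence each set
\[
    \left\{A\in\binom{[m]}{k}:|A\cap O|=t\right\}
\]
is $(G,\mathcal{P}(\binom{[m]}{k}))$-stable.

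Transitivity on $\binom{[m]}{k}$ therefore forces at most one value of $t$ to be attained by some $k$-subset. I will exhibit two distinct attainable values, which gives the contradiction.

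The attainable values of $t$ are exactly those integers with
\[
    \max(0,k-(m-a))\le t\le \min(k,a).
\]
If $a\ge 1$ and $k\le m-1$, the interval contains at least two integers. Indeed, $\min(k,a)\ge 1$ and $\max(0,k-m+a)\le k-1$. More precisely, we need $\min(k,a)>\max(0,k-m+a)$, and this holds in each case:
\begin{itemize}
    \item $k>0$ and $a>0$ give $\min(k,a)>0$;
    \item $k<m-a+k$ because $a<m$, since a non-transitive action has at least two orbits;
    \item $a>k-m+a$ because $k<m$.
\end{itemize}
Here $k\le m/2<m$ comes from the hypothesis. So we can choose $A$ with $|A\cap O|=\min(k,a)$ and $B$ with $|B\cap O|=\max(0,k-m+a)$. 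These lie in different $G$-orbits of $\binom{[m]}{k}$, contradicting transitivity.

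There is no real obstacle. The only points needing care are that $O$ is a proper nonempty subset, which is exactly non-transitivity, and that $1\le k<m$. I do not actually need the minimality of $O$: any proper nonempty $G$-stable $O$ works. I will therefore write the proof with an arbitrary orbit $O\ne[m]$, and simply record that $|A\cap O|$ is a $G$-invariant of $A$ taking at least two values on $\binom{[m]}{k}$.
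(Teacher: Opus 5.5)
Your proof is correct and is essentially the paper's argument. The paper picks an orbit $G\cdot i$ of size at most $m/2$ and compares a $k$-set containing $i$ with a $k$-set disjoint from $G\cdot i$, which is the special case ``$t\ge 1$ versus $t=0$'' of your invariant $|A\cap O|$. Your full count of the attainable values of $|A\cap O|$ removes the need for a small orbit and for $k\le m/2$, so your version holds for all $1\le k\le m-1$, consistent with the complement symmetry $k\leftrightarrow m-k$.
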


\begin{proof}
    Assume $(G,[m])$ is not transitive.
    Choose $i\in [m]$ such that $|G\cdot i|\le m/2$.
    Choose sets $A,B\in \binom{[m]}{k}$ such that $i\in A$ and $B\subset [m]\setminus (G\cdot i)$.
    Since $(G,\binom{[m]}{k})$ is transitive, there exists $\sigma\in G$ such that $\sigma(A)=B$, hence $\sigma(i)\in B\subset [m]\setminus (G\cdot i)$, a contradiction.
\end{proof}

\begin{lemma}
\label{lem:point-stabilizer-equals-containing-subsets-stabilizer}
    Let $m\ge 2$, $1\le k\le m/2$, and $G\le S_m$.
    For $i\in [m]$ define
    \[
        \Lambda_i:=\{A\in \binom{[m]}{k}: i\in A\},\qquad \Lambda:=\{\Lambda_i:i\in [m]\}.
    \]
    The action $(G,\mathcal{P}\binom{[m]}{k})$ induces an action $(G,\Lambda)$, and the map $i\mapsto \Lambda_i$ defines an isomorphism $(G,[m])\cong (G,\Lambda)$.
\end{lemma}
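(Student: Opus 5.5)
The plan is to mirror the proof of Lemma~\ref{lem:imprimitive-stabilizer-primitive-stabilizer} and reduce everything to Lemma~\ref{lem:equivariant-family-isomorphism}, taking $\Delta=[m]$ and $\Omega=\binom{[m]}{k}$ there. Two hypotheses have to be checked: that the map $\iota\colon[m]\to\Lambda$, $i\mapsto\Lambda_i$, is injective, and that $\sigma(\Lambda_i)\subset\Lambda_{\sigma(i)}$ for all $\sigma\in G$ and $i\in[m]$. Once both hold, Lemma~\ref{lem:equivariant-family-isomorphism} gives that $\Lambda$ is $(G,\mathcal{P}\binom{[m]}{k})$-stable, so the induced action $(G,\Lambda)$ is well defined, and that $(\mathrm{id},\iota)$ is an isomorphism $(G,[m])\cong(G,\Lambda)$.

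The equivariance inclusion is immediate. If $A\in\Lambda_i$, then $i\in A$, so $\sigma(i)\in\sigma(A)$. Also $|\sigma(A)|=k$ because $\sigma$ is a bijection. Hence $\sigma(A)\in\Lambda_{\sigma(i)}$.

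Injectivity is the only step that uses the hypothesis $k\le m/2$, and it is where I expect the (mild) obstacle to be. Let $i\ne j$. Since $k\le m/2<m$, there are at least $m-1\ge k-1$ elements of $[m]$ other than $j$. If $k\ge 2$, choose $A$ to consist of $i$ together with $k-1$ elements of $[m]\setminus\{i,j\}$; this is possible because $m-2\ge k-1$, which follows from $2k\le m$ and $k\ge 1$. If $k=1$, take $A=\{i\}$. In both cases $A\in\Lambda_i\setminus\Lambda_j$, so $\Lambda_i\ne\Lambda_j$. The inequality $m-2\ge k-1$ is exactly what rules out the degenerate case $k=m$, where every $\Lambda_i$ would equal $\{[m]\}$. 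The condition $m\ge 2$ guarantees the sets are nonempty but is otherwise harmless.
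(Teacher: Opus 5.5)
Your proposal is correct and follows essentially the same argument as the paper. You prove injectivity of $i\mapsto\Lambda_i$ by exhibiting a $k$-subset that contains $i$ but not $j$, which needs only $k<m$. You check $\sigma(\Lambda_i)\subset\Lambda_{\sigma(i)}$ directly and then invoke Lemma~\ref{lem:equivariant-family-isomorphism}.
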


\begin{proof}
    For $i\neq j$, the subsets $\Lambda_i$ and $\Lambda_j$ are distinct.
    Indeed, since $k<m$, there exists $A\in \binom{[m]}{k}$ such that $i\in A$ and $j\notin A$.
    Then $A\in \Lambda_i$ but $A\notin \Lambda_j$.
    Therefore the map $\iota:[m]\to \Lambda$, $i\mapsto \Lambda_i$, is injective.
    For $\sigma\in G$ and $i\in [m]$, clearly $\sigma(\Lambda_i)\subset \Lambda_{\sigma(i)}$.
    By Lemma~\ref{lem:equivariant-family-isomorphism}, $\Lambda$ is $(G,\mathcal{P}\binom{[m]}{k})$-stable and $(\mathrm{id},\iota)$ is an isomorphism.
\end{proof}

\subsection*{$k$-transitive actions}

\begin{lemma}
\label{lem:tuple-first-coordinate-stabilizer}
    Let $m\ge k\ge 1$ and $G\le S_m$.
    For $i\in [m]$ define
    \[
        \Lambda_i:=\{v\in [m]_k: v_1=i\},\qquad \Lambda:=\{\Lambda_i:i\in [m]\}.
    \]
    The action $(G,\mathcal{P}([m]_k))$ induces an action $(G,\Lambda)$, and the map $i\mapsto \Lambda_i$ defines an isomorphism $(G,[m])\cong (G,\Lambda)$.
\end{lemma}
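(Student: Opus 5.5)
The plan is to follow the same template as Lemma~\ref{lem:point-stabilizer-equals-containing-subsets-stabilizer} and Lemma~\ref{lem:imprimitive-stabilizer-primitive-stabilizer}. We verify the two hypotheses of Lemma~\ref{lem:equivariant-family-isomorphism} with $\Delta=[m]$ and $\Lambda_i=\{v\in[m]_k:v_1=i\}$, namely injectivity of $\iota\colon i\mapsto\Lambda_i$ and the inclusion $\sigma(\Lambda_i)\subset\Lambda_{\sigma(i)}$. That lemma then gives both the $(G,\mathcal{P}([m]_k))$-stability of $\Lambda$ and the isomorphism $(\mathrm{id},\iota)\colon(G,[m])\to(G,\Lambda)$.

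For injectivity, the only point to check is that each $\Lambda_i$ is nonempty. Since $k\le m$, we can complete $i$ to a $k$-tuple of distinct elements; for instance, take $v_1=i$ and let $v_2,\ldots,v_k$ be any $k-1$ distinct elements of $[m]\setminus\{i\}$, which exist because $|[m]\setminus\{i\}|=m-1\ge k-1$. Then $v\in\Lambda_i$. If $i\neq j$, any $v\in\Lambda_i$ has $v_1=i\neq j$, so $v\notin\Lambda_j$. Hence $\Lambda_i\neq\Lambda_j$ and $\iota$ is injective.

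For equivariance, take $\sigma\in G$ and $v\in\Lambda_i$. The coordinatewise action gives $\sigma(v)=(\sigma(v_1),\ldots,\sigma(v_k))$, which is again a tuple of distinct entries because $\sigma$ is a bijection. Its first coordinate is $\sigma(v_1)=\sigma(i)$, so $\sigma(v)\in\Lambda_{\sigma(i)}$. This shows $\sigma(\Lambda_i)\subset\Lambda_{\sigma(i)}$, and Lemma~\ref{lem:equivariant-family-isomorphism} finishes the proof.

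There is no real obstacle here. The only step needing care is the nonemptiness of $\Lambda_i$, which uses the hypothesis $k\le m$; the edge case $k=1$ is trivial, since then $\Lambda_i=\{(i)\}$.
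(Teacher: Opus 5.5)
Your proposal is correct and follows the paper's proof essentially verbatim: injectivity of $i\mapsto\Lambda_i$ via the first coordinate, the inclusion $\sigma(\Lambda_i)\subset\Lambda_{\sigma(i)}$ from the coordinatewise action, then Lemma~\ref{lem:equivariant-family-isomorphism}. Your explicit check that each $\Lambda_i$ is nonempty (using $k\le m$) makes precise a step the paper leaves implicit when it concludes $\Lambda_i\neq\Lambda_j$.
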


\begin{proof}
    For $i\neq j$, the sets $\Lambda_i$ and $\Lambda_j$ are distinct.
    Indeed, every $v\in \Lambda_i$ satisfies $v_1=i$, so $v\notin \Lambda_j$.
    Therefore the map $\iota:[m]\to \Lambda$, $i\mapsto \Lambda_i$, is injective.
    For $\sigma\in G$ and $i\in [m]$, clearly $\sigma(\Lambda_i)\subset \Lambda_{\sigma(i)}$.
    By Lemma~\ref{lem:equivariant-family-isomorphism}, $\Lambda$ is $(G,\mathcal{P}([m]_k))$-stable and $(\mathrm{id},\iota)$ is an isomorphism.
\end{proof}

\subsection*{Regular actions}

\begin{lemma}
\label{lem:regular-action-evaluation-fibers}
    Let $(G,\Omega)\cong(G,G)$ be a regular permutation action, with $G$ acting on itself by left multiplication.
    Let $\mu=\mu(G)$ be the minimal faithful permutation degree, and identify $G$ as a subgroup of $S_\mu$.
    Let $G\cdot i_1,\ldots,G\cdot i_s$ be the distinct orbits of $(G,[\mu])$.
    For each $1\le \ell\le s$ and $i\in G\cdot i_\ell$, define
    \[
        \Lambda_i:=\{\sigma\in G: \sigma(i_\ell)=i\}.
    \]
    Set
    \[
        \Lambda^{(\ell)}:=\{\Lambda_i:i\in G\cdot i_\ell\},\qquad \Lambda:=\bigcup_{\ell=1}^s \Lambda^{(\ell)}.
    \]
    The action $(G,\mathcal{P}(\Omega))$ induces an action $(G,\Lambda)$, and the map $i\mapsto \Lambda_i$ defines an isomorphism $(G,[\mu])\cong (G,\Lambda)$.
\end{lemma}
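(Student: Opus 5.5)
The plan is to follow the same pattern as Lemmas~\ref{lem:point-stabilizer-equals-containing-subsets-stabilizer} and~\ref{lem:tuple-first-coordinate-stabilizer}: show that $i\mapsto\Lambda_i$ is injective, verify the equivariance inclusion $\sigma(\Lambda_i)\subset\Lambda_{\sigma(i)}$, and then invoke Lemma~\ref{lem:equivariant-family-isomorphism} with $\Delta=[\mu]$. Throughout we work in the model $(G,G)$ with left multiplication, so $\Omega=G$ and $\sigma(A)=\sigma A$ for $A\subset G$. We may do this because an isomorphism $(G,\Omega)\cong(G,G)$ transports the family $\Lambda$ and the whole statement. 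The map $\Lambda$ is well defined because the orbits $G\cdot i_\ell$ partition $[\mu]$, so each $i$ determines a unique $\ell$ with $i\in G\cdot i_\ell$.

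For equivariance, fix $\sigma\in G$ and $i\in G\cdot i_\ell$. Then $\sigma(i)\in G\cdot i_\ell$ as well. If $\tau\in\Lambda_i$, then $\tau(i_\ell)=i$, hence $(\sigma\tau)(i_\ell)=\sigma(i)$, so $\sigma\tau\in\Lambda_{\sigma(i)}$. This gives $\sigma(\Lambda_i)\subset\Lambda_{\sigma(i)}$. The same reasoning shows that each $\Lambda^{(\ell)}$ is $(G,\mathcal{P}(\Omega))$-stable, which will matter later when the orbits are used.

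For injectivity, I first note that each $\Lambda_i$ is nonempty, since $i\in G\cdot i_\ell$ means some $\sigma$ has $\sigma(i_\ell)=i$; in fact $\Lambda_i$ is a left coset of $\Stab_G(i_\ell)$. Now take $i\neq j$. If both lie in the same orbit $G\cdot i_\ell$, then $\Lambda_i$ and $\Lambda_j$ are disjoint: an element $\sigma$ in both would satisfy $i=\sigma(i_\ell)=j$. Being nonempty, they are therefore distinct.

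The main obstacle is the case where $i\in G\cdot i_\ell$ and $j\in G\cdot i_{\ell'}$ with $\ell\neq\ell'$. Here the sets are cosets of different stabilizers $\Stab_G(i_\ell)$ and $\Stab_G(i_{\ell'})$, and they could a priori coincide as subsets of $G$. Suppose $\Lambda_i=\Lambda_j$. Since they are cosets $g\Stab_G(i_\ell)=g'\Stab_G(i_{\ell'})$, equal cosets force $\Stab_G(i_\ell)=\Stab_G(i_{\ell'})$. Then the actions of $G$ on the two orbits are isomorphic as $G$-sets, so the orbit $G\cdot i_{\ell'}$ is redundant. Deleting $G\cdot i_{\ell'}$ from $[\mu]$ still leaves a faithful action: any $\sigma$ acting trivially on the remaining points lies in every conjugate of $\Stab_G(i_\ell)=\Stab_G(i_{\ell'})$, hence fixes $G\cdot i_{\ell'}$ pointwise too, so $\sigma=1$. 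This faithful action has degree $\mu-|G\cdot i_{\ell'}|<\mu$, contradicting the minimality of $\mu$.

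So $\iota$ is injective, and Lemma~\ref{lem:equivariant-family-isomorphism} yields that $\Lambda$ is stable and that $(\mathrm{id},\iota)\colon(G,[\mu])\to(G,\Lambda)$ is an isomorphism.
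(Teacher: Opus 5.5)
Your proof is correct. It shares the paper's skeleton: the equivariance $\sigma\Lambda_i\subset\Lambda_{\sigma(i)}$, injectivity within a single orbit, and then Lemma~\ref{lem:equivariant-family-isomorphism}. The difference is how you rule out collisions $\Lambda_i=\Lambda_j$ across different orbits.

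The paper never analyzes such collisions directly. It applies Lemma~\ref{lem:equivariant-family-isomorphism} orbit by orbit. It then observes that $(G,\Lambda)$ is faithful: if $\sigma(i)\neq i$, then $\sigma\Lambda_i=\Lambda_{\sigma(i)}\neq\Lambda_i$ by within-orbit injectivity. Minimality of $\mu$ gives $|\Lambda|\ge\mu$, and $|\Lambda|\le\mu$ holds because $\Lambda$ is the image of $[\mu]$. Hence the surjection $i\mapsto\Lambda_i$ is a bijection, and the per-orbit isomorphisms glue together.

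You instead locate the possible collision explicitly. $\Lambda_i$ and $\Lambda_j$ are left cosets of $\Stab_G(i_\ell)$ and $\Stab_G(i_{\ell'})$, and equal cosets force equal subgroups. Then the orbit $G\cdot i_{\ell'}$ can be deleted without losing faithfulness, which contradicts minimality of $\mu$.

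Both arguments rest on minimality of $\mu$. The paper's cardinality trick is shorter and needs no coset bookkeeping. Your route shows concretely what minimality excludes: two orbits with the same point stabilizer. It also lets you apply Lemma~\ref{lem:equivariant-family-isomorphism} once with $\Delta=[\mu]$ instead of gluing per-orbit isomorphisms. Finally, you state the nonemptiness of $\Lambda_i$ explicitly, which the paper's within-orbit argument uses only tacitly.
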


\begin{proof}
    Fix $1\le \ell\le s$.
    For distinct $i,j\in G\cdot i_\ell$, the sets $\Lambda_i$ and $\Lambda_j$ are distinct.
    Indeed, if $\sigma\in \Lambda_i$, then $\sigma(i_\ell)=i\neq j$, so $\sigma\notin \Lambda_j$.
    Therefore the map $\iota_\ell:G\cdot i_\ell\to \Lambda^{(\ell)}$, $i\mapsto \Lambda_i$, is injective.
    For $\sigma\in G$, $i\in G\cdot i_\ell$, and $\tau\in \Lambda_i$, $(\sigma\tau)(i_\ell)=\sigma(i)$, so $\sigma\Lambda_i\subset \Lambda_{\sigma(i)}$.
    By Lemma~\ref{lem:equivariant-family-isomorphism}, $\Lambda^{(\ell)}$ is $(G,\mathcal{P}(\Omega))$-stable and $(\mathrm{id},\iota_\ell)$ is an isomorphism $(G,G\cdot i_\ell)\cong(G,\Lambda^{(\ell)})$.

    Let $1\neq \sigma\in G$.
    Choose $i\in [\mu]$ with $\sigma(i)\neq i$ and let $\ell$ be such that $i\in G\cdot i_\ell$.
    Then $\sigma(i)\in G\cdot i_\ell$, so $\sigma\Lambda_i=\Lambda_{\sigma(i)}\neq \Lambda_i$.
    It follows that $(G,\Lambda)$ is faithful.
    By minimality of $\mu$, $|\Lambda|\ge\mu$.
    On the other hand, $|\Lambda|\le\sum_{\ell=1}^s|G\cdot i_\ell|=\mu$, so $|\Lambda|=\mu$.
    Therefore the map $i\mapsto\Lambda_i$ is a bijection $[\mu]\to\Lambda$, and the per-orbit isomorphisms give an isomorphism $(G,[\mu])\cong(G,\Lambda)$.
\end{proof}

\section{Counting polynomials by group actions}
\label{sec:counting}

\subsection*{Definitions and notation}

For a separable polynomial $f\in\ZZ[x]$, let $\Omega_f:=\{\alpha\in\CC:f(\alpha)=0\}$ denote its roots, $K_f:=\QQ(\Omega_f)$ its splitting field, and $G_f:=\Gal(K_f/\QQ)$.
Since $f$ is separable, $G_f$ acts faithfully on $\Omega_f$, giving a permutation group $(G_f,\Omega_f)$.
Given a permutation group $(G,\Omega)$, set
\[
    B(H;G,\Omega):=\{f\in B_{|\Omega|}(H):(G_f,\Omega_f)\cong (G,\Omega)\}.
\]
In particular, every $f\in B(H;G,\Omega)$ is separable.
For $f\in B(H;G,\Omega)$, identify $(G_f,\Omega_f)=(G,\Omega)$, so that $\Omega$ is the set of roots of $f$.

If $K/\QQ$ is a Galois extension and $G\le \Gal(K/\QQ)$, write $K^G$ for the fixed field of $G$.
For fields $K_1,\ldots,K_s\subset \CC$, denote by $K_1\cdots K_s$ their compositum.

For an algebraic integer $b$, denote by $p_b\in \ZZ[x]$ its minimal polynomial over $\QQ$ and set $d_b:=\deg p_b=[\QQ(b):\QQ]$.
The absolute multiplicative Weil height of $b$ is given by
\[
    H_\omega(b):=\left(\prod_{\alpha\in\Omega_{p_b}}\max(1,|\alpha|)\right)^{1/d_b},
\]
see \cite[Proposition 1.6.6]{bombieri2006heights}.

\begin{lemma}
\label{lem:coefficients-of-stable-orbit-generate-subfield}
    Let $(G,\Omega)$ be a permutation group, and let $f\in B(H;G,\Omega)$. 
    Fix $A\subset \Omega$ and set
    \[
        f_A:=\prod_{\alpha\in A}(x-\alpha)=x^{|A|}+\sum_{i=0}^{|A|-1}b_i^{(A)} x^i.
    \]
    Then $K_f^{\Stab_G(A)}=\QQ\bigl(b_0^{(A)},\ldots,b_{|A|-1}^{(A)}\bigr)$.
\end{lemma}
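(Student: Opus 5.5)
By the Galois correspondence, it suffices to show that the subgroup of $G=\Gal(K_f/\QQ)$ fixing $F:=\QQ\bigl(b_0^{(A)},\ldots,b_{|A|-1}^{(A)}\bigr)$ equals $\Stab_G(A)$. Note first that $F\subset K_f$, since each $b_i^{(A)}$ is, up to sign, an elementary symmetric polynomial in the elements of $A\subset\Omega=\Omega_f$. So $F$ is an intermediate field of the Galois extension $K_f/\QQ$ and has the form $K_f^{\Gal(K_f/F)}$. The plan is to prove that $\Gal(K_f/F)=\Stab_G(A)$, arguing the two inclusions separately.

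\emph{Inclusion $\Stab_G(A)\subset\Gal(K_f/F)$.} Take $\sigma\in G$ and apply it coefficientwise to $f_A$. This gives
\[
\sigma(f_A)=\prod_{\alpha\in A}(x-\sigma(\alpha))=f_{\sigma(A)}.
\]
If $\sigma(A)=A$, then $\sigma(f_A)=f_A$, so $\sigma$ fixes every coefficient $b_i^{(A)}$ and therefore fixes $F$ pointwise.

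\emph{Inclusion $\Gal(K_f/F)\subset\Stab_G(A)$.} Suppose $\sigma$ fixes $F$ pointwise. Then $f_{\sigma(A)}=\sigma(f_A)=f_A$. Both $f_A$ and $f_{\sigma(A)}$ are monic and squarefree, because $f$ is separable and so the elements of $A$ are distinct roots. Hence each polynomial is determined by its set of roots in $\CC$, and the equality of polynomials gives $\sigma(A)=A$. This is exactly the point where separability of $f$, i.e.\ the fact that $f\in B(H;G,\Omega)$, is used.

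Combining the two inclusions, $\Gal(K_f/F)=\Stab_G(A)$, and the Galois correspondence then gives $F=K_f^{\Stab_G(A)}$. No step here is a real obstacle. The only care needed is in the identification $(G_f,\Omega_f)=(G,\Omega)$: the abstract permutation action on $\Omega$ must be the actual Galois action on the roots, so that $\sigma(A)$ means the image set of roots. This is precisely the identification fixed in the paper's notation.
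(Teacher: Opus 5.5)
Your proof is correct and is essentially the paper's argument. The paper proves the chain $\sigma\in\Stab_G(A)\iff\sigma f_A=f_A\iff\sigma$ fixes every $b_i^{(A)}$ $\iff\sigma\in\Gal\bigl(K_f/\QQ(b_0^{(A)},\ldots,b_{|A|-1}^{(A)})\bigr)$ and then invokes the Galois correspondence; your two inclusions are exactly this chain, proved one direction at a time.
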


\begin{proof}
    Let $\sigma\in G$. Then, since $f$ is separable,
    \begin{align*}
        \sigma\in\Stab_G(A)
        &\iff \sigma f_A=f_A \\
        &\iff \forall i:\sigma\bigl(b_i^{(A)}\bigr)=b_i^{(A)} \\
        &\iff \sigma\in \Gal\bigl(K_f/\QQ(b_0^{(A)},\ldots,b_{|A|-1}^{(A)})\bigr).
    \end{align*}
    The result then follows by Galois correspondence.
\end{proof}

The following theorem of Mahler \cite{mahler1962some} relates the Weil height of $b$ and the height of its minimal polynomial.
\begin{thm}[Mahler]
\label{thm:mahler}
    Fix $d\in \NN$. 
    There exist constants $c_1,c_2>0$ depending only on $d$ such that for every algebraic integer $b$ of degree $d_b\le d$ over $\QQ$, with minimal polynomial $p_b$,
    \[
        c_1 H(p_b) \le H_\omega(b)^{d_b}\le c_2 H(p_b).
    \]
\end{thm}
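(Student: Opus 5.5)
The plan is to identify $H_\omega(b)^{d_b}$ with the Mahler measure of $p_b$, and then compare the Mahler measure with the height in both directions by classical inequalities. Since $b$ is an algebraic integer, $p_b$ is monic of degree $d_b$, say $p_b(x)=\prod_{\alpha\in\Omega_{p_b}}(x-\alpha)=x^{d_b}+\sum_{i<d_b}a_ix^i$. By definition,
\[
    M(p_b):=\prod_{\alpha\in\Omega_{p_b}}\max(1,|\alpha|)=H_\omega(b)^{d_b}.
\]
So it suffices to show $c_1H(p_b)\le M(p_b)\le c_2H(p_b)$ with constants depending only on $d\ge d_b$.

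\emph{Lower bound.} Up to sign, each coefficient $a_i$ is the elementary symmetric polynomial $e_{d_b-i}$ of the roots. Each of its $\binom{d_b}{i}$ monomials is a product of distinct roots. Such a product has absolute value at most $\prod_{\alpha}\max(1,|\alpha|)=M(p_b)$, since the omitted factors are all $\ge 1$. Hence $|a_i|\le\binom{d_b}{i}M(p_b)\le 2^{d}M(p_b)$, and also $1\le M(p_b)$ handles the leading coefficient. Therefore $H(p_b)\le 2^dM(p_b)$, so one may take $c_1=2^{-d}$.

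\emph{Upper bound.} This is the only step needing an idea: the crude Cauchy bound $|\alpha|\le 1+H(p_b)$ gives only $M(p_b)\le(1+H(p_b))^{d}$, which is not linear in $H(p_b)$. Instead I will prove Landau's inequality $M(p)\le\|p\|_2$, where $\|p\|_2^2=\sum_i|a_i|^2$.
\begin{itemize}
\item By Jensen's formula applied factor by factor, $\int_0^1\log|p(e^{2\pi i\theta})|\,d\theta=\sum_\alpha\log\max(1,|\alpha|)=\log M(p)$.
\item By concavity of $\log$ (Jensen's inequality), this integral is at most $\frac12\log\int_0^1|p(e^{2\pi i\theta})|^2\,d\theta$.
\item By Parseval, $\int_0^1|p(e^{2\pi i\theta})|^2\,d\theta=\sum_i|a_i|^2$.
\end{itemize}
Combining these gives $M(p_b)\le\sqrt{d_b+1}\,H(p_b)\le\sqrt{d+1}\,H(p_b)$, so $c_2=\sqrt{d+1}$ works. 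The constants depend only on $d$, as required.
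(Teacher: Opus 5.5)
Your proof is correct. The paper gives no argument of its own and simply cites Mahler, and your write-up is essentially the classical proof behind that citation: you identify $H_\omega(b)^{d_b}$ with the Mahler measure of the monic $p_b$, get $H(p_b)\le 2^{d}M(p_b)$ from the elementary symmetric functions, and get $M(p_b)\le\|p_b\|_2\le\sqrt{d+1}\,H(p_b)$ from Jensen's formula, Jensen's inequality and Parseval (Landau's inequality), so the constants $c_1=2^{-d}$, $c_2=\sqrt{d+1}$ depend only on $d$.
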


\subsection*{Splitting field factorization}
\begin{prop}
\label{prop:splitting-field-factorization-from-group-actions}
    Let $(G,\Omega)$ be a transitive permutation group.
    Let $\Lambda\subset\mathcal{P}(\Omega)$ be a $(G,\mathcal{P}(\Omega))$-stable family.
    Let $\Lambda^{(1)},\ldots,\Lambda^{(s)}$ be the distinct $G$-orbits in $(G,\Lambda)$.
    For each $\ell$, let $(G_\ell,\Lambda^{(\ell)})$ be the permutation representation of $(G,\Lambda^{(\ell)})$.
    Assume that $(G,\Lambda)$ is faithful.
    For each $\ell$, fix $A_\ell\in \Lambda^{(\ell)}$ and put 
    \[
        \nu_\ell:=\frac{|\Lambda^{(\ell)}|\cdot|A_\ell|}{|\Omega|}.
    \]

    Then there exists a constant $c>0$ such that for every $H>0$ and $f\in B(H;G,\Omega)$, there exist polynomials $q_1,\ldots,q_s$ with $K_f=K_{q_1}\cdots K_{q_s}$ and
    \[
        q_\ell\in B(cH^{\nu_\ell};G_\ell,\Lambda^{(\ell)}).
    \]
\end{prop}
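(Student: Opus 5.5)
The plan is to build each $q_\ell$ as the minimal polynomial of a primitive element of $E_\ell:=K_f^{\Stab_G(A_\ell)}$ obtained as a small integer linear combination of the coefficients of $f_{A_\ell}$. The orbit of $A_\ell$ under $G$ is $\Lambda^{(\ell)}$, so $[E_\ell:\QQ]=|G:\Stab_G(A_\ell)|=|\Lambda^{(\ell)}|$. By Lemma~\ref{lem:coefficients-of-stable-orbit-generate-subfield}, $E_\ell=\QQ\bigl(b_0^{(A_\ell)},\ldots,b_{|A_\ell|-1}^{(A_\ell)}\bigr)$, and these $b_i$ are algebraic integers because the roots of the monic $f\in\ZZ[x]$ are.

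\textbf{Primitive element.} Put $d=|A_\ell|$ and consider $\beta=\sum_i t_i b_i^{(A_\ell)}$ with $t_i\in\ZZ$. For $\sigma\in G$, $\sigma(\beta)=\sum_i t_i b_i^{(\sigma A_\ell)}$. The polynomials $f_{B}$ for distinct $B\in\Lambda^{(\ell)}$ are distinct, since their root sets differ. Hence the linear forms $\sum_i t_i b_i^{(B)}$ are pairwise distinct for distinct $B$. Avoiding the finitely many hyperplanes on which two of them coincide, we can choose $t\in\ZZ^{d}$ with $|t_i|\le C(|\Lambda^{(\ell)}|,d)$, a bound depending only on $(G,\Omega)$, so that the conjugates $\beta_B:=\sum_i t_ib_i^{(B)}$, $B\in\Lambda^{(\ell)}$, are pairwise distinct. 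Then $\beta$ has exactly $|\Lambda^{(\ell)}|$ conjugates, so $\QQ(\beta)=E_\ell$. Let
\[
q_\ell:=\prod_{B\in\Lambda^{(\ell)}}(x-\beta_B)\in\ZZ[x],
\]
which is monic, separable and irreducible. Its roots are identified $G$-equivariantly with $\Lambda^{(\ell)}$ via $B\mapsto\beta_B$. Thus $K_{q_\ell}$ is the Galois closure of $E_\ell$, and $\Gal(K_{q_\ell}/\QQ)$, acting on the roots, is the permutation representation $(G_\ell,\Lambda^{(\ell)})$. (The action of $G$ on $K_{q_\ell}$ factors through $G_\ell$ with kernel the kernel of the action on $\Lambda^{(\ell)}$.)

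\textbf{Compositum.} $K_{q_1}\cdots K_{q_s}$ is the fixed field of the intersection of the kernels of the actions $(G,\Lambda^{(\ell)})$. That intersection is the kernel of $(G,\Lambda)$, which is trivial by faithfulness. Hence the compositum equals $K_f$.

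\textbf{Height bound (main step).} We need $H(q_\ell)\ll H^{\nu_\ell}$, and the plan is to use Theorem~\ref{thm:mahler} together with Weil heights. Every root $\alpha$ of $f\in B_n(H)$ satisfies $|\alpha|\le 1+H\ll H$, taking $H\ge1$. Each $b_i^{(B)}$ is an elementary symmetric function of the elements of $B$, so $\max(1,|\beta_B|)\ll\prod_{\alpha\in B}\max(1,|\alpha|)$. Multiplying over $B\in\Lambda^{(\ell)}$ gives
\[
\prod_{B}\max(1,|\beta_B|)\ll\prod_{\alpha\in\Omega}\max(1,|\alpha|)^{N_\alpha},
\]
where $N_\alpha=\#\{B\in\Lambda^{(\ell)}:\alpha\in B\}$. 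Since $\Lambda^{(\ell)}$ is a $G$-orbit and $G$ is transitive on $\Omega$, $N_\alpha$ is independent of $\alpha$. Double counting gives $N_\alpha=|\Lambda^{(\ell)}||A_\ell|/|\Omega|=\nu_\ell$, and this is where transitivity is essential. Therefore
\[
H_\omega(\beta)^{d_\beta}\ll\Bigl(\prod_{\alpha\in\Omega}\max(1,|\alpha|)\Bigr)^{\nu_\ell}=\bigl(H_\omega(\cdot)\text{-Mahler measure of }f\bigr)^{\nu_\ell}\ll H^{\nu_\ell},
\]
using that the Mahler measure of $f$ is $\ll H(f)$. Note that $f$ may be reducible, so we bound its Mahler measure directly by $\sqrt{n+1}\,H$ rather than through Theorem~\ref{thm:mahler}. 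Applying Theorem~\ref{thm:mahler} to $\beta$ with $d=|\Lambda^{(\ell)}|$ then gives $H(q_\ell)\le cH^{\nu_\ell}$, with all constants depending only on $(G,\Omega)$.
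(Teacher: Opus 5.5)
Your proof is correct and follows the paper's skeleton: the fields $E_\ell=K_f^{\Stab_G(A_\ell)}$ generated by the coefficients of $f_{A_\ell}$, a primitive element of controlled height, Theorem~\ref{thm:mahler}, and the compositum argument via faithfulness. The difference is in the height step, which the paper outsources.

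The paper shows that $F_\ell=\prod_{A\in\Lambda^{(\ell)}}f_A$ is $G$-invariant, hence equal to $f^{\nu_\ell}$ by irreducibility of $f$. It then cites \cite[Lemma~4.4]{bary2024probabilistic} to bound each $H_\omega(b_i^{(A_\ell)})$ by $H^{\nu_\ell/|\Lambda^{(\ell)}|}$, and \cite[Lemma~4.5]{bary2024probabilistic} to get a primitive element $\beta_\ell$ of comparable height.

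You instead build $\beta$ explicitly as a bounded integer combination of the coefficients, avoiding the at most $\binom{|\Lambda^{(\ell)}|}{2}$ coincidence hyperplanes. The bound is uniform in $f$ because a nonzero linear form vanishes at at most $(T+1)^{d-1}$ points of $\{0,\ldots,T\}^d$, so $T=\binom{|\Lambda^{(\ell)}|}{2}$ suffices. You then bound $\prod_B\max(1,|\beta_B|)$ in one stroke by $M(f)^{\nu_\ell}$, where $M(f)=\prod_{\alpha\in\Omega}\max(1,|\alpha|)$. Your double count $N_\alpha=\nu_\ell$ is the paper's identity $F_\ell=f^{\nu_\ell}$ in another guise, since $\prod_B f_B=\prod_\alpha(x-\alpha)^{N_\alpha}$.

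What your route buys is self-containment and transparency. The bound $\max_i|b_i^{(B)}|\ll\prod_{\alpha\in B}\max(1,|\alpha|)$ holds at every conjugate simultaneously, which is exactly why no exponent is lost when passing to a linear combination of the coefficients. You also verify explicitly, via the equivariant bijection $B\mapsto\beta_B$, that $(G_{q_\ell},\Omega_{q_\ell})\cong(G_\ell,\Lambda^{(\ell)})$; the paper leaves this implicit. The paper's route is shorter but depends on the external lemmas.

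Two small slips. First, $f$ is irreducible because $(G,\Omega)$ is transitive, so your aside that it may be reducible is mistaken; this is harmless, since $M(f)\le\sqrt{n+1}\,H(f)$ holds for every $f$. Second, the bound $|\alpha|\le 1+H$ is never used.
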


\begin{proof}
    Fix $H>0$ and $f\in B(H;G,\Omega)$.
    For $A\in \Lambda$, define $E_A:=K_f^{\Stab_G(A)}$ and
    \[
        f_A:=\prod_{\alpha\in A}(x-\alpha)=x^{|A|}+\sum_{i=0}^{|A|-1} b_i^{(A)} x^i.
    \]
    By Lemma~\ref{lem:coefficients-of-stable-orbit-generate-subfield}, $E_A=\QQ\bigl(b_0^{(A)},\ldots,b_{|A|-1}^{(A)}\bigr)$.
 
    By definition $\sigma f_A=f_{\sigma(A)}$ for all $\sigma\in G$ and $A\in \Lambda$.
    By Galois correspondence,
    \[
        [E_{A_\ell}:\QQ]=|G/\Stab_G(A_\ell)|=|\Lambda^{(\ell)}|.
    \]
    Set
    \[
        F_\ell:=\prod_{\sigma\in G/\Stab_G(A_\ell)} \sigma f_{A_\ell} = \prod_{A\in \Lambda^{(\ell)}} f_A.
    \]
    Since $\sigma F_\ell=F_\ell$ for all $\sigma\in G$, it follows that $F_\ell\in \QQ[x]$.

    Since $(G,\Omega)$ is transitive, $f$ is irreducible.
    By construction, every root of $F_\ell$ is a root of $f$, and therefore $F_\ell=f^{n_\ell}$ for some $n_\ell\ge 0$.
    Comparing degrees,
    \[
        n_\ell = \frac{\deg F_\ell}{\deg f} = \frac{|\Lambda^{(\ell)}|\cdot|A_\ell|}{|\Omega|} = \nu_\ell,
    \]
    hence $F_\ell=f^{\nu_\ell}$.
    
    Fix $0\le i \le |A_\ell|-1$.
    By \cite[Lemma~4.4]{bary2024probabilistic} applied to the identity $F_\ell=\prod_{A\in \Lambda^{(\ell)}} f_A$,
    \[
        H_\omega\left(b_i^{(A_\ell)}\right) \ll H(f^{\nu_\ell})^{1/|\Lambda^{(\ell)}|} \ll H^{\nu_\ell/|\Lambda^{(\ell)}|}.
    \]
    By \cite[Lemma~4.5]{bary2024probabilistic}, there exists an algebraic integer $\beta_\ell$ with $E_{A_\ell}=\QQ(\beta_\ell)$ and
    \[
        H_\omega(\beta_\ell)\ll H^{\nu_\ell/|\Lambda^{(\ell)}|}.
    \]
    Let $q_\ell$ be the minimal polynomial of $\beta_\ell$.
    Since the implied constants in \cite[Lemmas~4.4 and~4.5]{bary2024probabilistic} and Theorem~\ref{thm:mahler} depend only on the degrees, there exists $c_\ell>0$, depending only on $(G,\Omega)$ and $\Lambda$, such that $H(q_\ell)\le c_\ell H^{\nu_\ell}$.
    Taking $c:=\max_{\ell} c_\ell$ gives, for all $\ell$,
    \[
        q_\ell\in B(cH^{\nu_\ell};G_\ell,\Lambda^{(\ell)}).
    \]

    Finally, for each $\ell$, the field $K_{q_\ell}$ is the compositum in $K_f$ of the fields $E_A$ with $A\in \Lambda^{(\ell)}$.
    By Galois correspondence,
    \[
        \Gal(K_f/K_{q_\ell})=\bigcap_{A\in \Lambda^{(\ell)}}\Stab_G(A)=\Fix_G(\Lambda^{(\ell)}).
    \]
    Since $(G,\Lambda)$ is faithful,
    \[
        \Gal\left(K_f/(K_{q_1}\cdots K_{q_s})\right)=\bigcap_{\ell=1}^s \Fix_G(\Lambda^{(\ell)})=\Fix_G(\Lambda)=1,
    \]
    therefore $K_f=K_{q_1}\cdots K_{q_s}$.
\end{proof}

\begin{prop}
\label{prop:counting-via-splitting-field-factorization}
    Fix $\delta>0$. 
    Let $(G,\Omega)$ be a transitive permutation group.
    Fix $s\in \NN$ and transitive permutation groups $(G_1,\Omega_1),\ldots,(G_s,\Omega_s)$.
    Let $\mathcal{H}_1,\ldots,\mathcal{H}_s:(0,\infty)\to (0,\infty)$ be functions.
    For $H>0$, set
    \[
        \mathcal{K}(H):=\{K_f: f\in B(H;G,\Omega)\}.
    \]
    Assume that for every $H>0$ and every $K\in \mathcal{K}(H)$ there exist polynomials $q_1,\ldots,q_s$ such that $K=K_{q_1}\cdots K_{q_s}$ and
    \[
        q_\ell\in B(\mathcal{H}_\ell(H);G_\ell,\Omega_\ell)
    \]
    for all $1\le \ell\le s$.
    Then
    \[
        |B(H;G,\Omega)|\ll H^{1+\delta}\prod_{\ell=1}^s \left|B(\mathcal{H}_\ell(H);G_\ell,\Omega_\ell)\right|.
    \]
\end{prop}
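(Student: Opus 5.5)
We argue in two steps: first bound the number of splitting fields, then bound the number of polynomials sharing a given splitting field. Write
\[
    |B(H;G,\Omega)|=\sum_{K\in\mathcal{K}(H)}\bigl|\{f\in B(H;G,\Omega):K_f=K\}\bigr|.
\]

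\textbf{Step 1 (fibres over a fixed field).} Fix $K\in\mathcal{K}(H)$. We claim that the number of $f\in B(H;G,\Omega)$ with $K_f=K$ is $\ll H^{1+\delta}$, uniformly in $K$. This is the content of \cite[Proposition~2.2]{lemke2020upper}, which we invoke as a black box. Its mechanism is as follows. Since $(G,\Omega)$ is transitive, $f$ is irreducible, so $f$ is the minimal polynomial of a root $\alpha\in K$. The field $\QQ(\alpha)$ is the fixed field of a point stabilizer, of which there are only $O_G(1)$ choices. Inside a fixed number field $L$ of degree $n$, one counts algebraic integers $\alpha$ generating $L$ whose minimal polynomial has height $\le H$. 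Such $\alpha$ lie in a lattice of rank $n$, and their conjugates are bounded by $\ll H$. Lemke Oliver and Thorne obtain the bound $\ll H^{1+\delta}$ with the implied constant independent of $L$, by counting $\alpha$ via the successive minima of $\OO_L$ together with a divisor-type bound. Since only the Galois closure matters and we sum over $O(1)$ subfields, this bound is uniform over $K$.

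\textbf{Step 2 (counting fields).} By hypothesis, each $K\in\mathcal{K}(H)$ can be written as $K_{q_1}\cdots K_{q_s}$ with $q_\ell\in B(\mathcal{H}_\ell(H);G_\ell,\Omega_\ell)$. Choosing one such tuple for each $K$ gives a map from $\mathcal{K}(H)$ into $\prod_\ell B(\mathcal{H}_\ell(H);G_\ell,\Omega_\ell)$. This map is injective, because the tuple $(q_1,\ldots,q_s)$ determines the compositum $K$. Hence
\[
    |\mathcal{K}(H)|\le\prod_{\ell=1}^s\left|B(\mathcal{H}_\ell(H);G_\ell,\Omega_\ell)\right|.
\]

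\textbf{Combining.} Multiplying the two steps gives the stated bound.

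The main obstacle is the uniformity in Step 1: the implied constant must not depend on the field $K$. If citing \cite{lemke2020upper} directly were insufficient, the fallback would be to reprove it. One would embed $\OO_L$ into $\mathbb{R}^n$. Every integral generator $\alpha$ with $H(p_\alpha)\le H$ has all conjugates $\ll H$. The key is then to show that the number of lattice points in the box of side $\ll H$ that are primitive, meaning they do not lie in a proper subfield, is $\ll H^{1+\delta}$. This uses that the first successive minimum is $1$, and that the larger minima of the trace form for non-degenerate generators force the count to be $\ll H\cdot(\text{discriminant-type factor})$, absorbed via $H^\delta$.
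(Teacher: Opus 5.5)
Your main line is the paper's proof. You partition by splitting field, prove a per-field bound $\ll H^{1+\delta}$ uniform in $K$ (via $f=p_\alpha$ and the $O_G(1)$ possible root fields $\QQ(\alpha)\subset K$), and bound $|\mathcal{K}(H)|$ by the injection $K\mapsto(q_1,\ldots,q_s)$. Citing \cite[Proposition~2.2]{lemke2020upper} as a black box for the per-field bound is acceptable, since the paper credits the same fact to it in the introduction. With that citation your argument is complete. The paper itself derives the bound from Theorem~\ref{thm:mahler} and \cite[Lemma~4.7]{bary2024probabilistic}.

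However, the mechanism you describe for that bound, and your fallback, are wrong and would fail if carried out. Knowing that every conjugate of $\alpha$ is $\ll H$ only places $\alpha$ in a box of side $\ll H$ in the Minkowski embedding of $\OO_L$. That box contains $\asymp H^n/\sqrt{|D_L|}$ lattice points, and all but $O_L(H^{n/2})$ of them generate $L$, because proper subfields are sublattices of smaller rank. In $\ZZ[i]$, for example, there are $\asymp H^2$ elements $a+bi$ with $|a|,|b|\le H$ and $b\neq 0$. Only $\asymp H$ of them have minimal polynomial $x^2-2ax+a^2+b^2$ of height $\le H$. So no count of primitive points in a box can give $H^{1+\delta}$.

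The missing ingredient is the step the paper makes explicit. By Mahler's theorem, $H(f)\le H$ gives $H_\omega(\alpha)\le C_nH^{1/n}$. This is the \emph{multiplicative} constraint $\prod_{\sigma}\max(1,|\sigma(\alpha)|)\ll H$ over the embeddings $\sigma$ of $L$, which cuts out a hyperbolic region rather than a box. Uniformity in $L$ then comes from $|N_{L/\QQ}(\alpha)|\ge 1$ for nonzero $\alpha\in\OO_L$. Split the region into $\ll(\log H)^n$ dyadic boxes with side lengths $X_\sigma$ satisfying $\prod_\sigma X_\sigma\ll H$, and rescale each to a unit box. The norm bound gives first successive minimum $\gg H^{-1/n}$. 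Together with Minkowski's second theorem, each box then contains $\ll H$ points of $\OO_L$, independently of $L$. This is what \cite[Lemma~4.7]{bary2024probabilistic} supplies in the paper's proof.
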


\begin{proof}
    Partition $B(H;G,\Omega)$ according to the splitting field,
    \[
        |B(H;G,\Omega)|=\sum_{K\in \mathcal{K}(H)}\left|\{f\in B(H;G,\Omega):K_f=K\}\right|.
    \]

    Write $n:=|\Omega|$. 
    Fix $K\in \mathcal{K}(H)$ and $f\in B(H;G,\Omega)$ with $K_f=K$.
    Let $\alpha$ be a root of $f$.
    Since $(G,\Omega)$ is transitive, $f$ is irreducible, so $\alpha$ is an algebraic integer with minimal polynomial $f$ and degree $d_\alpha=[\QQ(\alpha):\QQ]=n$.
    Put $L:=\QQ(\alpha)\subset K$.
    By Theorem~\ref{thm:mahler}, there exists $C_n>0$ depending only on $n$ such that
    \[
        H_\omega(\alpha)\le C_n H(f)^{1/n}\le C_n H^{1/n}.
    \]
    By \cite[Lemma 4.7]{bary2024probabilistic}, for every number field $L$ of degree $n$ with ring of integers $\OO_L$,
    \[
        \left|\left\{\alpha\in \OO_L:H_\omega(\alpha)\le C_n H^{1/n}\right\}\right|\ll H^{1+\delta},
    \]
    where the implied constant depends only on $n$ and $\delta$.
    Each such $\alpha$ determines $f$ as its minimal polynomial.
    Therefore, for each fixed subfield $L\subset K$ of degree $n$, there are $\ll H^{1+\delta}$ possibilities for $f$ with $\QQ(\alpha)=L$ and $H(f)\le H$.

    Since $\Gal(K/\QQ)\cong G$, Galois correspondence implies that $K$ has at most $C_G$ subfields of degree $n$, for some constant $C_G>0$ depending only on $G$.
    Therefore,
    \[
        \left|\{f\in B(H;G,\Omega):K_f=K\}\right|\ll H^{1+\delta},
    \]
    uniformly in $K\in \mathcal{K}(H)$. 
    Consequently,
    \[
        |B(H;G,\Omega)|\ll H^{1+\delta}|\mathcal{K}(H)|.
    \]

    Finally, by hypothesis, for every $K\in \mathcal{K}(H)$ there exist polynomials $q_1,\ldots,q_s$ such that $K=K_{q_1}\cdots K_{q_s}$ and $q_\ell\in B(\mathcal{H}_\ell(H);G_\ell,\Omega_\ell)$. 
    Therefore,
    \[
        |\mathcal{K}(H)|\le \prod_{\ell=1}^s \left|B(\mathcal{H}_\ell(H);G_\ell,\Omega_\ell)\right|,
    \]
    as claimed.
\end{proof}

\section{Proof of main results}
\label{sec:proofs}
All four proofs share the same structure.
Given a transitive action $(G,\Omega)$, we use the lemmas of Section~\ref{sec:group-actions} to construct a faithful $(G,\mathcal{P}(\Omega))$-stable family $\Lambda$, then apply Propositions~\ref{prop:splitting-field-factorization-from-group-actions} and~\ref{prop:counting-via-splitting-field-factorization} to reduce the count to simpler actions, and conclude using known bounds.

\subsection*{Primitive wreath product actions}
Fix $m,r\ge 2$, $1\le k\le m/2$, and $G\le S_m\wr S_r$.

\begin{lemma}
\label{lem:imprimitive-count}
    Fix $\delta>0$. Assume $(G,\Omega^I)$ is transitive. Then
    \[
        \left|B(H;G,\Omega^I)\right| \ll H^{m+r-1+\delta}. 
    \]
\end{lemma}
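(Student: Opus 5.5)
We would prove the bound with the same two-step mechanism used throughout: construct a faithful $(G,\mathcal{P}(\Omega^I))$-stable family, apply Proposition~\ref{prop:splitting-field-factorization-from-group-actions} to express $K_f$ as a compositum of simpler splitting fields, and then count with Proposition~\ref{prop:counting-via-splitting-field-factorization}. Here $n=|\Omega^I|=rm$. The natural family is the block system
\[
    \Lambda:=\bigl\{\{i\}\times[m]: i\in[r]\bigr\}\subset \mathcal{P}(\Omega^I),
\]
together with the singletons of $\Omega^I$. More economically, it should suffice to take $\Lambda$ consisting of the blocks $\{i\}\times[m]$ and the singletons $\{(i,j)\}$. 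This family is $G$-stable because $G$ permutes blocks, and it is faithful because it contains the singletons. Since $(G,\Omega^I)$ is transitive, $G$ is transitive on blocks and on singletons, so there are two orbits. For the block orbit, $|\Lambda^{(1)}|=r$ and $|A_1|=m$, giving $\nu_1=rm/(rm)=1$. For the singleton orbit, $\nu_2=1$.

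The singleton orbit alone just returns $f$ itself, so this naive choice is useless: it produces $H^{1}\cdot H^{r}\cdot H^{rm}$. The point must instead be that $K_f$ is generated by the block field together with something of size about $H^{m}$. So we would take $\Lambda$ to be the blocks together with the singletons of one block only. That family is not $G$-stable, however, so we adjust the approach and argue directly instead.

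Let $E=K_f^{\Stab_G(\{1\}\times[m])}$, which has degree $r$ over $\QQ$. The block polynomial $f_{A}$ with $A=\{1\}\times[m]$ has coefficients in $E$, and $f=\prod_{\text{blocks}} f_{A}$. By Proposition~\ref{prop:splitting-field-factorization-from-group-actions} applied to the block family, the Galois closure of $E$ is $K_q$ for some $q\in B(cH;G_1,[r])$, with $G_1\le S_r$ transitive. There are $\ll H^{r}$ such $q$.

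Given the splitting field $M=K_q$, we count $f$ by counting the field $L=\QQ(\alpha)$ and then $\alpha$. The roots of $f$ in block $1$ are the roots of $f_A\in E[x]$, a degree-$m$ polynomial over the degree-$r$ field $E$, with $H_\omega$ of its coefficients $\ll H^{1/r}$ by the Bary-Soroker lemma used in the proof of Proposition~\ref{prop:splitting-field-factorization-from-group-actions}. The number of such monic polynomials over $\OO_E$ is $\ll (H^{1/r})^{rm}\cdot H^{\delta}=H^{m+\delta}$, since the count of integers of $E$ of Weil height at most $X$ is $\ll X^{r}$. Each such $f_A$ determines $f$ as its norm $\prod_{\sigma} \sigma f_A$. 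This gives
\[
    |B(H;G,\Omega^I)|\ll \sum_{q} H^{m+\delta} \ll H^{r}\cdot H^{m+\delta}.
\]
That is $H^{m+r+\delta}$, one more than the target.

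The main obstacle is saving this extra power of $H$. We would save it by noting that the trace coefficient is rational: the coefficient of $x^{rm-1}$ in $f$ is $-\mathrm{Tr}_{E/\QQ}$ of the corresponding coefficient of $f_A$. Alternatively, $E$ only needs to be counted as a field rather than through its polynomial $q$, and the number of degree-$r$ fields with a generator of height $H^{1/r}$ is $\ll H^{r-1+\delta}$, since $\beta$ and $\beta+t$ with $t\in\ZZ$ generate the same field. We would pursue this second route: count the fields $E$ by $\ll H^{r-1+\delta}$, then count $f_A\in\OO_E[x]$ by $\ll H^{m+\delta}$, giving $H^{m+r-1+\delta}$.
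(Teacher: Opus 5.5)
Your first stage is sound. Counting $q\in B(cH;G_1,[r])$ (the construction in the proof of Proposition~\ref{prop:splitting-field-factorization-from-group-actions} still supplies $q$ even though the block family is not faithful) and then counting $f_A\in\OO_E[x]$ with coefficients of Weil height $\ll H^{1/r}$ does give $|B(H;G,\Omega^I)|\ll H^{m+r+\delta}$.

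The gap is the step meant to save the last power of $H$. You claim that only $\ll H^{r-1+\delta}$ fields of degree $r$ admit a generator $\beta$ with $H_\omega(\beta)\ll H^{1/r}$. This is false already for $r=2$. For every squarefree $D\equiv 1\pmod 4$ with $1<D\le H^2$, let $a$ be the least odd integer exceeding $\sqrt{D}$. Then $x^2-ax+(a^2-D)/4\in\ZZ[x]$ is irreducible, has height at most $H+2$, and generates $\QQ(\sqrt{D})$. So there are $\gg H^2=H^r$ such fields.

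The translation heuristic fails for a concrete reason. $\beta$ and $\beta+t$ do generate the same field, but $p_\beta(x-t)$ has its lower coefficients shifted by roughly $t$ times the trace coefficient. For a typical $p_\beta$ in the box, only $O(1)$ integer shifts keep the height $\ll H$, so translation orbits inside $B_r(cH)$ do not have length $\gg H$.

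Your other suggestion also saves nothing. That the coefficient of $x^{rm-1}$ in $f$ is a trace from $E$ imposes no new condition, since $|\mathrm{Tr}_{E/\QQ}(b)|\le rH_\omega(b)^r\ll H$ holds automatically for $b\in\OO_E$ with $H_\omega(b)\ll H^{1/r}$.

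What is missing is a joint count of the pairs $(E,f_A)$. Your bound multiplies the worst-case number of fields ($\asymp H^r$) by the worst-case number of admissible $f_A$ per field (up to $H^{m+\delta}$). But the fields with many integral elements of Weil height $\ll H^{1/r}$ have small discriminant and are few, while a typical field $E$ carries far fewer admissible $f_A$. Any saving must exploit this trade-off, and the decoupled product cannot.

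The paper does not attempt this by hand. It deduces the lemma directly from \cite[Theorem~1.6]{bary2024probabilistic}, which bounds the probability that $G_f$ is transitive $(m,r)$-imprimitive by $\ll H^{-mr+\mu+\delta}$ with an explicit $\mu$. It then checks that $\mu\le m+r-1$ for all $m,r\ge 2$. Without that input, or an honest joint count, your argument stops at $H^{m+r+\delta}$.
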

\begin{proof}
    Following \cite{bary2024probabilistic}, such a group $G$ is called transitive $(m,r)$-imprimitive.
    Let $\PP_{f\in B_{mr}(H)}$ denote the uniform probability on $B_{mr}(H)$.
    By \cite[Theorem 1.6]{bary2024probabilistic},
    \begin{align*}
        \frac{\left|B(H;G,\Omega^I)\right|}{\left|B_{mr}(H)\right|}
        &=\PP_{f\in B_{mr}(H)}\bigl((G_f,\Omega_f)\cong (G,\Omega^I)\bigr)\\
        &\le \PP_{f\in B_{mr}(H)}\bigl(G_f \textnormal{ is transitive }(m,r)\textnormal{-imprimitive}\bigr)\\
        &\ll H^{-mr+\mu+\delta},
    \end{align*}
    where $\mu=r-\frac{1}{r}+1$ if $m=2$ and $mr\geq 10$, and
    \[
        \mu=\max\left\{m+\frac{1}{2}+\frac{1}{r},r+(1-\frac{1}{r})(m-1),r+\frac{m}{2}\right\},
    \]
    otherwise.
    Since $\left|B_{mr}(H)\right|\asymp H^{mr}$ and $\mu\le m+r-1$ in all cases, the result follows.
\end{proof}

\begin{proof}[Proof of Theorem~\ref{thm:primitive-wreath-product}]
    It suffices to prove the theorem for $k=1$.
    Indeed, let $2\le k\le m/2$ and set $N=\binom{m}{k}$.
    Fix a bijection $\varphi:\binom{[m]}{k}\to [N]$.
    Conjugation by $\varphi$ gives an isomorphism $(S_m,\binom{[m]}{k})\cong (S_m,[N])$, hence an embedding $S_m\hookrightarrow S_N$.
    This extends to an embedding $S_m\wr S_r\hookrightarrow S_N\wr S_r$.
    Since the primitive action is defined coordinatewise from the action of $S_m$ on $\binom{[m]}{k}$, this embedding identifies
    \[
        (S_m\wr S_r,\Omega^P(m,r,k))\cong (S_m\wr S_r,\Omega^P(N,r,1))\hookrightarrow(S_N\wr S_r,\Omega^P(N,r,1)).
    \]
    In particular, $(G,\Omega^P(m,r,k))\cong (G,\Omega^P(N,r,1))$ is transitive, so the case $k=1$ with $m$ replaced by $N$ implies the general case.
    For the remainder of the proof assume $k=1$.

    Set $\Omega=\Omega^P=[m]^r$.
    By assumption $(G,\Omega)$ is transitive.
    For $(i,j)\in\Omega^I$, set
    \[
        \Lambda_{(i,j)}:=\{v\in\Omega:v_i=j\},\qquad \Lambda:=\{\Lambda_{(i,j)}:(i,j)\in\Omega^I\}.
    \]
    By Lemma~\ref{lem:imprimitive-stabilizer-primitive-stabilizer}, $(G,\Omega^I)\cong (G,\Lambda)$.
    By Lemma~\ref{lem:imprimitive-action-faithful}, $(G,\Lambda)$ is faithful.

    Let $G\cdot i_1,\ldots,G\cdot i_s$ be the distinct $G$-orbits in $\Omega^T$.
    Write $r_\ell:=|G\cdot i_\ell|$, so that $\sum_{\ell=1}^s r_\ell=r$.
    By Lemma~\ref{lem:primitive-transitive-imprimitive-orbit}, the $G$-orbits in $\Omega^I$ are $(G\cdot i_\ell)\times[m]$.
    Hence the $G$-orbits in $\Lambda$ are
    \[
        \Lambda^{(\ell)}:=\{\Lambda_{(i,j)}:(i,j)\in (G\cdot i_\ell)\times[m]\}.
    \]
    
    Since $|\Lambda^{(\ell)}|=mr_\ell$ and $|\Lambda_{(i_\ell,1)}|=m^{r-1}$,
    \[
        \nu_\ell:=\frac{|\Lambda^{(\ell)}|\cdot|\Lambda_{(i_\ell,1)}|}{|\Omega|}=\frac{(mr_\ell)m^{r-1}}{m^r}=r_\ell.
    \]
    By Proposition~\ref{prop:splitting-field-factorization-from-group-actions}, applied with $(G,\Omega)$ and $\Lambda$, there exists a constant $c>0$ such that for every $H>0$ and $f\in B(H;G,\Omega)$ there exist polynomials $q_1,\ldots,q_s$ with $K_f=K_{q_1}\cdots K_{q_s}$ and
    \[
        q_\ell\in B(cH^{r_\ell};G_\ell,\Lambda^{(\ell)}).
    \]

    By Lemma~\ref{lem:induced-subgroups-from-top-orbits}, for each $\ell$ there exists $\Gamma_\ell\le S_m\wr S_{r_\ell}$ such that $(\Gamma_\ell,\Omega^I(m,r_\ell))$ is transitive and
    \[
        (G_\ell,\Lambda^{(\ell)})\cong (\Gamma_\ell,\Omega^I(m,r_\ell)).
    \]

    Fix $\epsilon>0$ and put $\delta=\epsilon/(r+1)$.
    Apply Proposition~\ref{prop:counting-via-splitting-field-factorization} with $s$, $(G_\ell,\Omega_\ell)=(\Gamma_\ell,\Omega^I(m,r_\ell))$, and $\mathcal{H}_\ell(H)=cH^{r_\ell}$.
    It follows that
    \[
        |B(H;G,\Omega)|\ll H^{1+\delta}\prod_{\ell=1}^s\left|B(cH^{r_\ell};\Gamma_\ell,\Omega^I(m,r_\ell))\right|.
    \]
    By Lemma~\ref{lem:imprimitive-count}, for each $\ell$,
    \[
        \left|B(cH^{r_\ell};\Gamma_\ell,\Omega^I(m,r_\ell))\right|\ll H^{r_\ell(m+r_\ell-1+\delta)}.
    \]
    Since $r_\ell\le r$ and $\sum_{\ell=1}^s r_\ell=r$,
    \[
        \prod_{\ell=1}^s H^{r_\ell(m+r_\ell-1+\delta)}
        =H^{\sum_{\ell=1}^s r_\ell(m+r_\ell-1+\delta)}
        \le H^{\sum_{\ell=1}^s r_\ell(m+r-1+\delta)}
        =H^{r(m+r-1+\delta)}.
    \]
    Therefore,
    \[
        |B(H;G,\Omega)|\ll H^{1+\delta}H^{r(m+r-1+\delta)}=H^{r(m+r-1)+1+\epsilon},
    \]
    as claimed.
\end{proof}

\subsection*{$k$-homogeneous actions}

\begin{proof}[Proof of Theorem~\ref{thm:k-homogeneous}]
    Let $G\le S_m$ and assume that $(G,\binom{[m]}{k})$ is transitive. 
    Set $\Omega:=\binom{[m]}{k}$.
    By Lemma~\ref{lem:k-homogeneous-implies-transitive}, $(G,[m])$ is transitive.
    For $i\in[m]$, set
    \[
        \Lambda_i:=\{A\in\Omega: i\in A\},\qquad \Lambda:=\{\Lambda_i:i\in[m]\}.
    \]
    By Lemma~\ref{lem:point-stabilizer-equals-containing-subsets-stabilizer}, $(G,[m])\cong (G,\Lambda)$.

    By Proposition~\ref{prop:splitting-field-factorization-from-group-actions}, applied with $(G,\Omega)$, $\Lambda$, $\Lambda^{(1)}=\Lambda$, and
    \[
        \nu_1=\frac{|\Lambda^{(1)}|\cdot|\Lambda_1|}{|\Omega|}
        =\frac{m\binom{m-1}{k-1}}{\binom{m}{k}}=k,
    \]
    there exists a constant $c>0$ such that for every $H>0$ and $f\in B(H;G,\Omega)$ there exists $q\in B(cH^k;G,[m])$ with $K_f=K_q$.

    Fix $\epsilon>0$. Apply Proposition~\ref{prop:counting-via-splitting-field-factorization} with $s=1$, $(G_1,\Omega_1)=(G,[m])$, and $\mathcal{H}_1(H)=cH^k$.
    It follows that
    \[
        |B(H;G,\Omega)|\ll H^{1+\epsilon} |B(cH^k;G,[m])|\le H^{1+\epsilon} |B_m(cH^k)|\ll H^{mk+1+\epsilon},
    \]
    as claimed.
\end{proof}

\subsection*{$k$-transitive actions}

\begin{proof}[Proof of Theorem~\ref{thm:k-transitive}]
    Set $\Omega:=[m]_k$.
    Let $G\le S_m$ be $k$-transitive.
    In particular $(G,[m])$ is transitive.
    For $i\in[m]$, set
    \[
        \Lambda_i:=\{v\in\Omega: v_1=i\},\qquad \Lambda:=\{\Lambda_i:i\in[m]\}.
    \]
    By Lemma~\ref{lem:tuple-first-coordinate-stabilizer}, $(G,[m])\cong (G,\Lambda)$.

    By Proposition~\ref{prop:splitting-field-factorization-from-group-actions}, applied with $(G,\Omega)$, $\Lambda$, $\Lambda^{(1)}=\Lambda$, and
    \[
        \nu_1=\frac{|\Lambda^{(1)}|\cdot|\Lambda_1|}{|\Omega|}
        =\frac{m\cdot (m-1)_{k-1}}{(m)_k}=1,
    \]
    there exists a constant $c>0$ such that for every $H>0$ and $f\in B(H;G,\Omega)$ there exists $q\in B(cH;G,[m])$ with $K_f=K_q$.

    Fix $\epsilon>0$.
    Apply Proposition~\ref{prop:counting-via-splitting-field-factorization} with $s=1$, $(G_1,\Omega_1)=(G,[m])$, and $\mathcal{H}_1(H)=cH$.
    It follows that
    \[
        |B(H;G,\Omega)|\ll H^{1+\epsilon}|B(cH;G,[m])|\le H^{1+\epsilon}|B_m(cH)|\ll H^{m+1+\epsilon},
    \]
    as claimed.
\end{proof}

\subsection*{Regular actions}

\begin{proof}[Proof of Theorem~\ref{thm:regular}]
    Let $(G,\Omega)\cong(G,G)$ be a regular permutation group, with $G$ acting on itself by left multiplication.
    Let $\mu=\mu(G)$ be the minimal faithful permutation degree, and identify $G$ as a subgroup of $S_\mu$.
    Let $G\cdot i_1,\ldots,G\cdot i_s$ be the distinct $G$-orbits in $[\mu]$.
    For each $1\le\ell\le s$ and $i\in G\cdot i_\ell$, set
    \[
        \Lambda_i:=\{\sigma\in\Omega:\sigma(i_\ell)=i\},\qquad \Lambda^{(\ell)}:=\{\Lambda_i:i\in G\cdot i_\ell\}.
    \]
    Put $\Lambda:=\bigcup_{\ell=1}^s\Lambda^{(\ell)}$.
    By Lemma~\ref{lem:regular-action-evaluation-fibers}, $(G,[\mu])\cong(G,\Lambda)$.
    Since $\left|\Lambda^{(\ell)}\right|=|G \cdot i_\ell|$, and $\left|\Lambda_{i_\ell}\right|=\left|\Stab_G(i_\ell)\right|$, by the orbit-stabilizer theorem,
    \[
        \nu_\ell:=\frac{|\Lambda^{(\ell)}|\cdot|\Lambda_{i_\ell}|}{|\Omega|}=\frac{|G\cdot i_\ell|\cdot|\Stab_G(i_\ell)|}{|G|}=1.
    \]

    By Proposition~\ref{prop:splitting-field-factorization-from-group-actions}, applied with $s$, $(G,\Omega)$ and $\Lambda$, there exists a constant $c>0$ such that for every $H>0$ and $f\in B(H;G,\Omega)$ there exist polynomials $q_1,\ldots,q_s$ with $K_f=K_{q_1}\cdots K_{q_s}$ and
    \[
        q_\ell\in B(cH;G_\ell,\Lambda^{(\ell)}).
    \]

    Fix $\epsilon>0$.
    Apply Proposition~\ref{prop:counting-via-splitting-field-factorization} with $s$, $(G_\ell,\Omega_\ell)=(G_\ell,\Lambda^{(\ell)})$, and $\mathcal{H}_\ell(H)=cH$.
    It follows that
    \[
        |B(H;G,\Omega)|\ll H^{1+\epsilon}\prod_{\ell=1}^s \left|B(cH;G_\ell,\Lambda^{(\ell)})\right|.
    \]
    Since $|\Lambda|=\sum_{\ell=1}^s |\Lambda^{(\ell)}|=\mu$,
    \[
        \prod_{\ell=1}^s \left|B(cH;G_\ell,\Lambda^{(\ell)})\right|
        \le \prod_{\ell=1}^s |B_{|\Lambda^{(\ell)}|}(cH)|
        \ll H^{\sum_{\ell=1}^s |\Lambda^{(\ell)}|}= H^\mu.
    \]
    Therefore,
    \[
        |B(H;G,\Omega)|\ll H^{\mu+1+\epsilon},
    \]
    as claimed.
\end{proof}

\bibliographystyle{plain}
\bibliography{references}

@article{bary2024probabilistic,
  title={Probabilistic Galois Theory: The square discriminant case},
  author={Bary-Soroker, Lior and Ben-Porath, Or and Matei, Vlad},
  journal={Bulletin of the London Mathematical Society},
  volume={56},
  number={6},
  pages={2162--2177},
  year={2024},
  publisher={Wiley Online Library}
}

@article{bhargava2025galois,
  title={Galois groups of random integer polynomials and van der Waerden's Conjecture},
  author={Bhargava, Manjul},
  journal={Annals of Mathematics},
  volume={201},
  number={2},
  pages={339--377},
  year={2025},
  publisher={Department of Mathematics, Princeton University Princeton, New Jersey, USA}
}

@article{van1936seltenheit,
  title={Die Seltenheit der reduziblen Gleichungen und der Gleichungen mit Affekt},
  author={van der Waerden, Bartel L},
  journal={Monatshefte f{\"u}r Mathematik und Physik},
  volume={43},
  number={1},
  pages={133--147},
  year={1936},
  publisher={Springer}
}

@book{bombieri2006heights,
  title={Heights in Diophantine geometry},
  author={Bombieri, Enrico and Gubler, Walter},
  number={4},
  year={2006},
  publisher={Cambridge university press}
}

@article{mahler1962some,
  title={On some inequalities for polynomials in several variables},
  author={Mahler, Kurt},
  journal={J. London Math. Soc},
  volume={37},
  number={1},
  pages={341--344},
  year={1962}
}

@book{meldrum1995wreath,
  title={Wreath products of groups and semigroups},
  author={Meldrum, John DP},
  volume={74},
  year={1995},
  publisher={CRC Press}
}

@article{babai1993faithful,
  title={On faithful permutation representations of small degree},
  author={Babai, L{\'a}szl{\'o} and Goodman, Albert J and Pyber, L{\'a}szl{\'o}},
  journal={Communications in Algebra},
  volume={21},
  number={5},
  pages={1587--1602},
  year={1993},
  publisher={Taylor \& Francis}
}

@article{lemke2020upper,
  title={Upper bounds on polynomials with small Galois group},
  author={Lemke Oliver, Robert J and Thorne, Frank},
  journal={Mathematika},
  volume={66},
  number={4},
  pages={1054--1059},
  year={2020},
  publisher={Wiley Online Library}
}

@article{chela1963reducible,
  title={Reducible polynomials},
  author={Chela, R},
  journal={Journal of the London Mathematical Society},
  volume={1},
  number={1},
  pages={183--188},
  year={1963},
  publisher={Wiley Online Library}
}

@inproceedings{knobloch1955hilbertschen,
  title={Zum hilbertschen irreduzibilit{"a}tssatz},
  author={Knobloch, Hans-Wilhelm},
  booktitle={Abhandlungen aus dem Mathematischen Seminar der Universit{"a}t Hamburg},
  volume={19},
  number={3},
  pages={176--190},
  year={1955},
  organization={Springer}
}

@inproceedings{gallagher1973large,
  title={The large sieve and probabilistic {G}alois theory},
  author={Gallagher, Patrick X},
  booktitle={Proc. Sympos. Pure Math},
  volume={24},
  pages={91--101},
  year={1973}
}

@article{zywina2010hilbert,
  title={Hilbert's irreducibility theorem and the larger sieve},
  author={Zywina, David},
  journal={arXiv preprint arXiv:1011.6465},
  year={2010}
}

@article{dietmann2013probabilistic,
  title={Probabilistic {G}alois theory},
  author={Dietmann, Rainer},
  journal={Bulletin of the London Mathematical Society},
  volume={45},
  number={3},
  pages={453--462},
  year={2013},
  publisher={Wiley Online Library}
}

@article{chow2023towards,
  title={Towards van der Waerden's conjecture},
  author={Chow, Sam and Dietmann, Rainer},
  journal={Transactions of the American Mathematical Society},
  volume={376},
  number={04},
  pages={2739--2785},
  year={2023}
}

@article{anderson2023quantitative,
  title={Quantitative {H}ilbert irreducibility and almost prime values of polynomial discriminants},
  author={Anderson, Theresa C and Gafni, Ayla and Lemke Oliver, Robert J and Lowry-Duda, David and Shakan, George and Zhang, Ruixiang},
  journal={International Mathematics Research Notices},
  volume={2023},
  number={3},
  pages={2188--2214},
  year={2023},
  publisher={Oxford University Press}
}

@article{livingstone1965transitivity,
  title={Transitivity of finite permutation groups on unordered sets},
  author={Livingstone, Donald and Wagner, Ascher},
  journal={Mathematische Zeitschrift},
  volume={90},
  number={5},
  pages={393--403},
  year={1965},
  publisher={Springer}
}

\end{document}